\def\keywords{\xdef\@thefnmark{}\@footnotetext}
\titleformat{\subsubsection}[runin]
       {\normalfont\bfseries}
       {\thesubsubsection}
       {0.5em}
       {}
       [.]
\theoremstyle{plain}
\newtheorem{theorem}{Theorem}[section]
\newtheorem{lemma}[theorem]{Lemma}
\newtheorem{proposition}[theorem]{Proposition}
\theoremstyle{remark}
\newtheorem{remark}[theorem]{Remark}
\newtheorem{definition}[theorem]{Definition}
\newtheorem*{example}{Example}
\newcommand\nnfootnote[1]{%
  \begin{NoHyper}
  \renewcommand\thefootnote{}\footnote{#1}%
  \addtocounter{footnote}{-1}%
  \end{NoHyper}
}
\newcommand{\calD}{\mathcal{D}}
\newcommand{\calA}{\mathcal{A}}
\newcommand{\calF}{\mathcal{F}}
\newcommand{\calH}{\mathcal{H}}
\newcommand{\scrD}{\mathscr{D}}
\newcommand{\scrL}{\mathscr{L}}
\newcommand{\subscript}[2]{$#1 _ #2$}
\begin{document}


\title{\vspace{-1em}Characterization of the second order random fields subject to linear distributional PDE constraints}
\date{}



\author[1]{Iain Henderson \thanks{henderso@insa-toulouse.fr}}
\author[1]{Pascal Noble}
\author[1]{Olivier Roustant}
\affil[1]{Institut de Mathématiques de Toulouse; UMR 5219\\ 
Université de Toulouse; CNRS \\
INSA, F-31077 Toulouse, France}


\maketitle

\begin{abstract}
Let $L$ be a linear differential operator acting on functions defined over an open set $\calD \subset \mathbb{R}^d$. In this article, we characterize the measurable second order random fields $U = (U(x))_{x \in \calD}$ whose sample paths all verify the partial differential equation (PDE) $L(u) = 0$, solely in terms of their first two moments. When compared to previous similar results, the novelty lies in that the equality $L(u) = 0$ is understood in the \textit{sense of distributions}, which is a powerful functional analysis framework mostly designed to study linear PDEs. This framework enables to reduce to the minimum the required differentiability assumptions over the first two moments of $(U(x))_{x\in\calD}$ as well as over its sample paths in order to make sense of the PDE $L(U_{\omega})=0$. In view of Gaussian process regression (GPR) applications, we show that when $(U(x))_{x\in\calD}$ is a Gaussian process (GP), the sample paths of $(U(x))_{x\in\calD}$ conditioned on pointwise observations still verify the constraint $L(u)=0$ in the distributional sense. We finish by deriving a simple but instructive example, a GP model for the 3D linear wave equation, for which our theorem is applicable and where the previous results from the literature do not apply in general.
\end{abstract}

\nnfootnote{\emph{Key words} Generalized functions, Linear constraints, Linear Partial Differential Equations, Second order random fields}


\maketitle

\section{Introduction}

When dealing with an unknown function of interest $u : \calD \rightarrow \mathbb{R}$ where say $\calD \subset \mathbb{R}^d$, it is common (as e.g. in Bayesian inverse problems) to assume that it is a sample path of a random field $U=(U(x))_{x\in\calD}$. Incorporating prior knowledge over $u$, such as smoothness, is then achieved by constraining the law of $U$ accordingly.
Sometimes, this prior knowledge comes from physical considerations. If $u$ describes a positive quantity such as mass or energy, then the random variables $U(x)$ should all be positive almost surely (a.s.). In many cases, this physical constraint can be more precisely translated as a partial differential equation (PDE). Such equations are a pivotal tool for modelling, understanding and predicting real-life phenomena such as those arising from fluid mechanics, electromagnetics or biology to name a few. The most simple (yet central) PDEs are those that are linear. In this article, we will only consider homogeneous linear PDEs, which take the form
\begin{equation}\label{eq:edp}
L(u) := \sum_{|\alpha|\leq n}a_{\alpha}(x)\partial^{\alpha}u = 0.
\end{equation}
Above, $u$ is the unknown function of interest, defined over an open set $\calD\subset\mathbb{R}^d$, and $L$ is a linear partial differential operator. In (\ref{eq:edp}), for a multi-index $\alpha =(\alpha_1,...,\alpha_d)^T \in \mathbb{N}^d$, we used the notations $|\alpha| = \alpha_1+...+\alpha_d$ and $\partial^{\alpha} = (\partial_{x_1})^{\alpha_1}... (\partial_{x_d})^{\alpha_d}$. Homogeneous PDEs, i.e. PDEs with a null term on the right-hand side of \eqref{eq:edp}, are often encountered to describe conservation laws, such as conservation of mass, energy or momentum in closed systems \cite{Serre1999SystemsOC}.

In order to incorporate the knowledge that $L(u)=0$ in the prior $U$, a natural question is whether one can characterize, in terms of their law, the random fields $U$ whose sample paths are all solutions to the PDE \eqref{eq:edp}. Let $U$ be a centered second order random field with covariance function $k$: under the assumption that $U$ is a Gaussian process (GP) whose sample paths are $n$ times differentiable, \cite{ginsbourger2016} proved for some classes of differential operators $L$ of order $n$ that (\cite{ginsbourger2016}, Sections 3.3 and 4.1)
\begin{align}\label{eq:degeneracy}
\mathbb{P}(L(U) = 0)=1 \iff \forall x \in \calD, L(k(x,\cdot)) = 0.
\end{align}
This property provides a simple characterization of the GPs that incorporate the PDE constraint \eqref{eq:edp} sample path-wise. Such GPs would fall in the category of ``physics-informed'' GPs in the machine learning community. In the proof of this property, the fact that the sample paths are $n$ times differentiable, i.e. that the PDE \eqref{eq:edp} can be understood \textit{pointwise}, is central. These functions are then \textit{strong} solutions of the PDE \eqref{eq:edp} (see Definition \ref{def:strong_sol}).

In the standard PDE approach though, equation (\ref{eq:edp}) is reinterpreted by weakening the definition of the derivatives of $u$, thereby weakening the required regularity assumptions over $u$. It can indeed happen in practice that the sought solutions of the PDE $L(u)=0$ are not $n$ times differentiable or even continuous (see e.g. \cite{evans1998}, Section 2.1), and they are only solutions of some weakened formulation of equation (\ref{eq:edp}). This is typically the case for hyperbolic PDEs such as the wave equation presented in Section \ref{Section:GP_wave}. We introduce here the distributional formulation of the PDE (\ref{eq:edp}), where the regularity assumptions over $u$ are relaxed to the maximum. As such, this formulation enables working with potentially singular solutions of equation \eqref{eq:edp}, solutions which are not allowed to appear in more restrictive functional frameworks (see also the upcoming Remark \ref{rk:meas_val_sol}). Another advantage of the distributional formulation is that it provides a unifying framework for dealing with linear PDEs, independently of their nature. In contrast, traditional weak or variational formulations vary greatly depending on the nature of the PDE. As an illustration, in \cite{evans1998}, one can compare the different function spaces for weak solutions associated to elliptic PDEs (Section 6.1.2), parabolic PDEs (Section 7.1.1(b)) and hyperbolic PDEs (Section 7.2.1(b)). The distributional formulation will be our main object of interest in this article, and can be seen as a weakened form of weak formulations of PDEs.
Consider equation (\ref{eq:edp}),  and ``test it locally": that is, multiply it by a compactly supported, infinitely differentiable test function $\varphi$ (i.e. $\varphi \in C_c^{\infty}(\calD)$) and integrate over $\calD$:
\begin{equation}\label{eq:towards_distribs}
\forall \varphi \in C_c^{\infty}(\calD),\ \sum_{|\alpha|\leq n}\int_{\calD} \varphi(x)a_{\alpha}(x)\partial^{\alpha}u(x)dx = 0.
\end{equation}
For each integral term above, perform $|\alpha|$ successive integrations by parts to transfer the derivatives from $u$ to $\varphi$. Since $\varphi$ is identically null on a neighbourhood of the boundary of $\calD$, the boundary terms of each integration by parts vanish and we obtain that
\begin{equation}\label{eq:def_sol_distribs}
\forall \varphi \in C_c^{\infty}(\calD),\ \int_{\calD} u(x)\sum_{|\alpha|\leq n}(-1)^{|\alpha|}\partial^{\alpha}(a_{\alpha}\varphi)(x)dx = 0.
\end{equation}
To make sense of \eqref{eq:def_sol_distribs}, one only requires $u$ to be locally integrable, i.e. $\int_K|u(x)|dx < +\infty$ for all compact set $K\subset \calD$. We then say that a locally integrable function $u$ is a solution to $L(u)=0$ in the sense of distributions, or distributional sense, if $u$ verifies (\ref{eq:def_sol_distribs}). In this case, $u$ is a solution to equation \eqref{eq:edp} in the sense of ``all smooth local averages'' (i.e. for all $\varphi\in C_c^{\infty}(\calD)$), though not pointwise in general: taking $\varphi(x) = \delta_0(x-x_0)$ is not allowed without additional assumptions over $u$.

The distributional formulation of the PDE $L(u)=0$ is ``compliant with physics'' too, as pointed out by W. Rudin (\cite{rudin1991}, p. 150): most of the sensors we use in practice are only capable of computing local averages of the physical quantity they are measuring. Suppose one wishes to check experimentally that a temperature field obeys the heat equation, by using a set of thermometers: then one will actually only deal with the distributional formulation of the heat equation.

The natural question that follows from this new definition is whether one can characterize, in terms of their law, the random fields whose sample paths are solutions to the PDE $L(u)=0$ \textit{in the distributional sense}. The answer is yes, and is the main content of this article. Under the assumptions that $U$ is a measurable centered second order random field and that its standard deviation function ${\sigma : x\longmapsto \sqrt{k(x,x)}}$ is locally integrable, we show in Proposition \ref{prop :diff_constraints_distrib} that
\begin{equation}\label{eq:degeneracy_distrib}
\mathbb{P}(L(U) = 0 \ \textrm{in the distrib. sense})=1 \iff \forall x \in \calD, L(k(x,\cdot)) = 0 \ \textrm{in the distrib. sense}.
\end{equation}

\subsection*{Related literature}
It is known, at least since the fifties, that some covariance functions are naturally linked to certain stochastic partial derivative equations (SPDEs), i.e. PDEs where the source term is random. For example, it was already observed in 1954 by \cite{whittle1954stationary} that the covariance function of a stationary GP $U$ verifying the two dimensional SPDE $(\alpha^2 - \partial_{xx}^2 - \partial_{yy}^2)^{3/4}U = W_S$, where $W_S$ is a spatial white noise process, is exponential, i.e. of the form $\text{Cov}(U(x+h),U(x))= C\exp(-\alpha|h|)$. Already for this SPDE, the differentiation has to be understood in a weakened sense as white noise processes are not random fields in the usual sense. In \cite{roques2022spatial}, SPDEs describing the random motion of micro-particles are introduced to link certain covariance functions, Matérn in particular, with an underlying physical model. We also refer to \cite{lindgren2022spde} for a large overview of the possible applications and recent developments pertaining to random fields defined by SPDEs. A general framework for the study of SPDEs was recently reintroduced in \cite{vergara2022general}, which was then used to
 classify the stationary \textit{generalized} random fields that are solutions of a wide class of linear SPDEs. In particular, \cite{vergara2022general} provides a description of all the second order stationary generalized random fields that are solutions to certain homogeneous PDEs, and the 3D wave equation in particular (which we also study in Section \ref{Section:GP_wave}), in terms of their covariance operator. Loosely speaking, generalized random fields are function-indexed random fields where the covariance function is replaced by a covariance operator. From a functional analysis point of view, this is actually very close to the tools we use here, although in this article we constrain ourselves to work with (standard) random fields with well-defined sample paths, as these are the objects that arise the most in  the random function models met in practice. The two other key differences between this work and \cite{vergara2022general} are that $(i)$ we do not focus on \textit{stationary} random field models for $u$ and $(ii)$ we focus on the \textit{homogeneous} case for PDE $L(u)=0$.

The literature concerning random fields that are PDE-constrained at the level of the sample paths is rather sparse. In \cite{ginsbourger2016}, general theorems are exposed for many different classes of linear operators acting on  suitable spaces of functions. These theorems take the form of equation \eqref{eq:degeneracy}, and can in turn be applied to certain differential operators (see \cite{ginsbourger2016}, Sections 3.3 and 4.1). \cite{scheuerer2012} builds covariance functions that ensure that the sample path of a given two or three dimensional random vector field are either divergence or curl free. This result is notable because  ``any'' three dimensional vector field can be decomposed as a sum of divergence and curl free vector fields through the Helmholtz-Hodge decomposition theorem. Moreover, divergence or curl free vector fields are commonly encountered in fluid mechanics. \cite{fiedler2016distances} extends the results of \cite{scheuerer2012} to random fields on the sphere of $\mathbb{R}^3$, which has been rediscovered later in \cite{fan2018modeling}. In \cite{estrade2020anisotropic}, stationary GPs are represented in terms of a random wavevector. \cite{estrade2020anisotropic} then characterizes the stationary GPs whose sample paths verify a homogeneous linear PDE, in terms of the spectral measure of the GP and in terms of its random wavevector. \cite{estrade2020anisotropic} additionally requires that the sample paths be infinitely differentiable, that the PDE's coefficients be constant and that only even orders of differentiation appear in the PDE. This is then applied to a few wave models. A simple algorithm for building linearly constrained GPs is proposed in \cite{jidling2017}, based on formal GPR derivations upon \eqref{eq:edp}; however, partly because the assumed regularity of $u$ is not fully addressed, the claim that the sample paths of the underlying GP are indeed linearly constrained is left unproved. This is clarified in \cite{hegerman2018}, where the requirement that $u \in C^{\infty}(\mathbb{R}^d)$ is made explicit and the enforcement of the PDE on the sample paths is proved for GPs whose sample paths are smooth. The algorithm from \cite{jidling2017} is then supplemented in \cite{hegerman2018}, where parametrizations of the solution spaces of \eqref{eq:edp} thanks to Gr\"{o}bner bases are proposed. In \cite{hegermann2021LinearlyCG}, the same author completes the approach from \cite{hegerman2018} by incorporating boundary conditions on hypersurfaces in the Gr\"{o}bner basis parametrization. With the idea to apply GPR to rigid body dynamics, \cite{geist2020} enforces Gauss' principle of least constraint on the sample paths of a GP. 

One can understand our main result (Proposition \ref{prop :diff_constraints_distrib}) as a characterization of the ``physics-informed'' random fields that incorporate the distributional PDE constraint $L(u)=0$ at the level of the sample paths. It turns out that the design of similar ``physics-informed priors'' has received a lot a attention from the machine learning community since the early 2000' (\cite{graepel2003}), in the context of Gaussian process regression (GPR); see Section \ref{subsub:GPR} for a description of this technique. GPR is a Bayesian framework for function regression and interpolation which is well suited for handling linear constraints, partly because GPs are ``stable under linear combinations'', see Section \ref{subsub:second_GP}. The recurring idea is to assume that the function $u$ in equation \eqref{eq:edp} is a sample path of a (centered) GP $U$ and to draw the consequences of equation \eqref{eq:edp} on the covariance function of $U$. The covariance function of $U$ is then expected to incorporate the constraint $L(u)=0$ in some sense. The majority of these works (except those mentioned above) do not aim at analysing whether the obtained covariance function indeed yields sample path PDE constraints over $U$: this is justified by the fact that they are only concerned with imposing the constraints on the function provided by GPR to approximate $u$. This approximation of $u$, which we denote by $\tilde{m}$, is called the Kriging mean in the GPR context; see equation \eqref{eq:krig mean} for a definition.

While they do not primarily focus on investigating sample path PDE constraints (contrarily to this article), the works coming from the GPR community are still very connected to this article. Indeed, they are concerned with designing explicit covariance functions that verify constraints of the form $L(k(\cdot,x))=0$ for all $x\in\calD$ (the PDE is understood in the strong sense in these works). Indeed, this constraint ensures that all the possible regression functions $\tilde{m}$ provided by the corresponding GPR model verify the constraint $L(\tilde{m})=0$ (as seen in equation \eqref{eq:krig mean}). Note that ``$L(k(\cdot,x))=0 \ \forall x\in\calD$'' is the right-hand side of equation \eqref{eq:degeneracy}: actually knowing covariance functions that verify this constraint is a necessary complement to the condition we prove in this article (Proposition \ref{prop :diff_constraints_distrib} is otherwise useless in practice). Explicit PDE constrained covariance functions were designed for a number of classical PDEs, namely: divergence-free vector fields \cite{Narcowich1994GeneralizedHI,scheuerer2012}, curl-free vector fields \cite{fuselier2007refined,scheuerer2012}, the Laplace equation \cite{Schaback2009SolvingTL,mendes2012,albert2020}, Maxwell's equations \cite{wahlstrom2013,jidling2017,hegerman2018} (although \cite{wahlstrom2013,jidling2017} only exploit curl/divergence free constraints), the 1D heat equation \cite{albert2020}, Helmholtz' 2D equations \cite{albert2020}, and linear solid mechanics \cite{Jidling2018ProbabilisticMA}.
\cite{hegermann2021LinearlyCG} and \cite{gulian2022} enforce homogeneous boundary conditions on the covariance function. 

We finish with a brief overview of the alternative ``physics-informed'' GPR models. Contrarily to the equation \eqref{eq:edp} considered here, one may put a random source term $f$ in the PDE and study instead the SPDE $L(u)=f$ : see \cite{raissi2017}, \cite{alvarez2013} and \cite{owhadi_bayes_homog} for entry points on the related literature. A recent article \cite{CHEN_owhadi_2021} extended the use of GPR to nonlinear PDEs by imposing the nonlinear interpolation constraints on the collocation points, setting the way forward for many possible applications of GPR to nonlinear realistic
PDE models, as found e.g. in fluid mechanics.
In \cite{NGUYEN_peraire}, the variational formulation (see \cite{evans1998}, Section 6.1.2 for a definition) of certain linear PDEs has been incorporated into a GPR framework. This approach requires the use of Gaussian generalized random fields (see \cite{agnan2004}, Section 2.2.1.1), or ``functional Gaussian processes'' following \cite{NGUYEN_peraire}. The variational formulation of a PDE differs from its distributional formulation in the choice of the space of test functions.
\subsubsection*{Contribution and organisation of the paper}
Consider the PDE in equation \eqref{eq:edp}, where the coefficients of the differential operator $L$ have possibly limited smoothness.
Consider also a centered second order measurable stochastic process ${U=(U(x))_{x\in\calD}}$ with covariance function $k(x,x')$ (see Sections \ref{subsub:sto_pro_mes} and \ref{subsub:second_GP}). Under the assumption that its standard deviation function {$\sigma : x\longmapsto k(x,x)^{1/2}$} is locally integrable, we show in Proposition \ref{prop :diff_constraints_distrib} that the announced equation \eqref{eq:degeneracy_distrib} holds.
The result is then compared to a previous result from \cite{ginsbourger2016}, which ensures pointwise linear differential degeneracy of the sample paths of $U$ under stronger assumptions. We then provide a simple corollary which states that linear distributional differential constraints are preserved when a GP $U$ is conditioned on pointwise observations, in view of GPR applications.

As an application example, we derive a general Gaussian process model for the homogeneous 3D free space wave equation, for which the solutions are not smooth in general. This equation is central for describing finite speed propagation phenomena as found e.g. in acoustics. Plugging this model in a GPR framework yields potential applications in different inverse problems related to this PDE, such as thermoacoustic tomography (i.e. initial condition reconstruction, \cite{kuchment_tomo}, Section 19.3.1.1), source localization or propagation speed estimation, following e.g. the GPR methodology from \cite{raissi2017} or \cite{ginsbourger2016}, Section 4.2.

This model is derived by putting GP priors over the initial conditions of the wave equation and in Proposition \ref{prop : wave kernel}, we obtain ``explicit'' formulas for the covariance function of the solution process, in the form of convolutions. From Propositions \ref{prop :diff_constraints_distrib} and \ref{prop : wave kernel}, we obtain that the sample paths of the corresponding (nonstationary) GP all verify the wave equation in the distributional sense. When the covariance functions of the initial conditions are not smooth enough, the result from \cite{ginsbourger2016} cannot be applied. Explicitly, for this PDE, choosing the commonly used 3/2-Mat\'ern covariance functions for the initial position is enough to land outside the scope of the result from \cite{ginsbourger2016} (Section \ref{subsub:extend_kwave}).

We emphasize that the covariance expressions exposed in Proposition \ref{prop : wave kernel} are original and interesting in themselves, as they can be used for efficient GPR for the wave equation.
Specifically, the key difference with the wave equation covariance functions presented in \cite{vergara2022general} is that here, no stationarity assumptions are made on the solution stochastic process $U$. In particular the spectral measure provided by Bochner's theorem \cite{gpml2006}, which is the key tool used in \cite{vergara2022general}, is not available anymore. We thus resort to more standard integration techniques to prove Proposition \ref{prop : wave kernel}.

The paper is organized as follow. For self-contain\-ment, Section \ref{Section:background} is dedicated to reminders on random fields and generalized functions. This Section and all the proofs are detailed enough so that this article is accessible to the analyst, the probability theorist and the statistician. In Section 3, we state and prove our new necessary and sufficient condition on random fields that are subject to linear distributional differential constraints. Section 4 is dedicated to the study of a GP model for the wave equation. We conclude in Section 5.

\section{Background}\label{Section:background}

\subsection{Random fields}\label{sub:sto_pro}
Let $(\Omega, \calA,\mathbb{P})$ be a probability space. For convenience, we will assume that it is complete, i.e. that $\calA$ contains the subsets of sets $A \in \calA$ such that $\mathbb{P}(A)=0$.

\subsubsection{Basic definitions}\label{subsub:def} Let $\calD \subset \mathbb{R}^d$ be an open set. In this article, a random field $U = (U(x))_{x \in \calD}$ is a collection of real random variables defined on $\Omega$. We define its sample path at point $\omega \in \Omega$ to be the deterministic function $x \longmapsto U(x)(\omega)$, and we denote it by $U_{\omega}$. Given an operator acting on the sample paths of $U$, an event of the form $\{L(U) \in A\}$ will always be understood sample path wise: that is, by definition, $\{L(U) \in A\} := \{\omega \in \Omega : L(U_{\omega}) \in A\}$. Such sets are not automatically measurable; still, they are measurable as soon as they contain an event of probability $1$ (as the ones in Propositions \ref{prop : lin constraints classic} and \ref{prop :diff_constraints_distrib}), since $(\Omega, \calA,\mathbb{P})$ is a complete probability space.

\subsubsection{Measurable random fields}\label{subsub:sto_pro_mes} In view of our main theorem, a necessary notion is that of the measurability of the random field $U$. $U$ is said to be measurable (\cite{doob_sto_pro}, p. 60 or \cite{legall2013}, p. 34) if it is measurable seen as a bivariate map $U : (\Omega \times \calD, \calA\otimes \mathcal{B}(\calD)) \longrightarrow (\mathbb{R},\mathcal{B}(\mathbb{R})),\  (\omega,x) \mapsto U(x)(\omega)$. Here, $\mathcal{B}(S)$ denotes the Borel $\sigma$-algebra of $S$ and $ \calA\otimes \mathcal{B}(\calD)$ denotes the product $\sigma$-algebra of $\calA$ and $\mathcal{B}(\calD)$. 

To work with measurable random fields, one will often consider random fields $U$ which are continuous \textit{in probability}, i.e. for all $x\in\calD$ and $\varepsilon > 0, \mathbb{P}(|U(x)-U(x+h)|>\varepsilon) \rightarrow 0$ when $h\rightarrow 0$. Indeed, continuity in probability implies the existence of a measurable modification of $U$, i.e. a measurable random field $\tilde{U}$ such that $\mathbb{P}(\tilde{U}(x)=U(x))=1$ for all $x\in\calD$ (\cite{doob_sto_pro}, Theorem 2.6, p. 61). One then implicitly works with $\tilde{U}$. In this article, we will directly assume that we deal with measurable random fields instead of assuming any continuity regularity on the sample paths of the said stochastic process. This is because pointwise continuity is not really relevant when working with PDEs in a weak sense; actually, one of the main points of working with weakened formulations is to avoid strong (i.e. pointwise) formulations. Note however that ensuring measurability outside of the above mentioned theorem, though possible, rapidly becomes tedious (see e.g. \cite{doob1937stochastic}, Theorem 2.3). A famous theorem from Kolmogorov (\cite{da2014stochastic}, Theorem 3.3 p. 73 and Theorem 3.4 p. 74) provides sufficient conditions for almost sure continuity of the sample paths, which in turn implies continuity in probability of the random field. This condition is phrased in terms of a sufficient Hölder control of the expectation of the increments of the process. Refinements in the case of Gaussian processes exist: see e.g. \cite{adler2007}, Theorem 1.4.1, p. 20. On a final note, the measurability assumption is discussed in \cite{steinwart2019convergence} (Theorem 3.3), where it is shown to be a necessary condition for the existence of Karhunen-Loève expansions of second order random fields.

\subsubsection{Second order random fields, Gaussian processes.}\label{subsub:second_GP} Note $L^2(\mathbb{P})$ the Hilbert space of real valued random variables $X$ such that $\mathbb{E}[X^2] < +\infty$. A stochastic process $(U(x))_{x\in\calD}$ is said to be second order if for all $x \in \calD, \ U(x)\in L^2(\mathbb{P})$. One can then define its mean and covariance functions by $m(x) = \mathbb{E}[U(x)]$ and $k(x,x')=\mathbb{E}[(U(x)-m(x))(U({x'})-m(x'))]$ respectively. One can then also define its standard deviation function
\begin{align}\label{eq:def_std_func}
\sigma : x \mapsto \sqrt{k(x,x)}.
\end{align}
A Gaussian process $(U(x))_{x\in\calD}$ over $\calD$ is a random field over $\calD$ such that for any $(x_1,...,x_n) \in \calD^n$ and any $(a_1,...,a_n)\in\mathbb{R}^n, \sum_i a_iU(x_i)$ is a Gaussian random variable; that is, the law of $(U({x_1}),...,U({x_n}))^T$ is a multivariate normal distribution.
The law of a GP is characterized by its mean and covariance functions (\cite{janson_1997}, Section 8). We write $(U(x))_{x \in \calD} \sim GP(m,k)$. Given a GP $(U(x))_{x\in\calD}$, we will sometimes use the space $\mathcal{L}(U) = \overline{\text{Span}(U(x), x \in \calD)}$, i.e. the Hilbert subspace of $L^2(\mathbb{P})$ induced by $U$. Since  $L^2(\mathbb{P})$-limits of Gaussian random variables drawn from the same GP remain Gaussian (\cite{janson_1997}, Section 1.3), $\mathcal{L}(U)$ only encompasses Gaussian random variables.

Whereas $m$ can be any function, the covariance function $k$ has to be symmetric and positive definite: for all $(x_1,...,x_n)$ in $\calD^n$, the matrix $(k(x_i,x_j))_{1\leq i,j \leq n}$ is symmetric and nonnegative definite.

Symmetric positive definite functions verify the Cauchy-Schwarz inequality \cite{gpml2006} :
\begin{align}\label{eq:CS PD}
\forall x,x' \in \calD, \ \ \ |k(x,x')| \leq \sqrt{k(x,x)}\sqrt{k(x',x')}.
\end{align} 
Note that there is a one-to-one correspondence between positive definite functions and the laws of centered GPs (\cite{doob_sto_pro}, Theorem 3.1). We provide below two examples of radial Matérn covariance functions (\cite{gpml2006}, pp. 84-85), which will be useful in Section \ref{Section:GP_wave}. Set $r = ||x-x'||$, the Euclidean distance between $x$ and $x'$, then the following two functions are valid covariance functions, given any $l > 0$:
\begin{align}\label{eq:matern}
k_{1/2}(x,x') &= \exp(-r/l),\ \ \ \ \ k_{3/2}(x,x') = (1+r/l)\exp(-r/l).
\end{align}

These covariance functions are widely used in machine learning, especially $k_{3/2}$. Almost surely, the sample paths of a GP with a Matérn covariance function $k_{\nu}$ with $\nu = m + 1/2,\ m\in \mathbb{N},$ are of differentiability class $C^m$ and not $C^{m+1}$. They are thus commonly used to model functions with finite smoothness. 

\subsection{Tools from functional analysis}\label{sub : distrib}\label{sub:func_analysis}
We refer to \cite{rudin1991} and \cite{treves2006topological} for further details on generalized functions and Radon measures. In this whole subsection, $\calD$ is an open set of $\mathbb{R}^d$.

\subsubsection{\texorpdfstring{Class $C^m$ functions, test functions, locally integrable functions}{functionspaces}}\label{subsub:L1loc_test} Given $m\in\mathbb{N}$, $C^m(\calD)$ denotes the space of real-valued functions defined over $\calD$ of class $C^m$, and $C_c^m(\calD)$ denotes the subspace of $C^m(\calD)$ of functions $\varphi$ whose support $\text{Supp}(\varphi)$ is compact. Recall that $\text{Supp}(\varphi)$ is the closure of the set $\{x : \varphi(x) \neq 0\}$.
The space $C_c^{\infty}(\calD)$, which we will rather denote $\scrD(\calD)$, is the space of compactly supported infinitely differentiable functions supported on $\calD$, also known as test functions. $L^1_{loc}(\calD)$ denotes the space of measurable scalar functions $f$ defined on $\calD$ that are locally integrable, i.e. such that $\int_K |f| < +\infty$ for all compact sets $K \subset \calD$. Two locally integrable functions are equal in $L^1_{loc}(\calD)$ when they are equal almost everywhere (a.e.) in the sense of the Lebesgue measure over $\mathbb{R}^d$. $L_{loc}^1(\calD)$ is a very large space which contains the space of piecewise continuous functions, but also all the local Lebesgue spaces $L_{loc}^p(\calD), p \geq 1$ and thus all the Sobolev spaces of nonnegative exponent. It is in fact the largest space of functions that can be alternatively viewed as continuous linear forms over $\scrD(\calD)$ (see Section \ref{subsub:regular_distrib} below).

\subsubsection{Generalized functions}\label{subsub:gen_func} We endow $\scrD(\calD)$ with its usual LF-space topology, defined for example in \cite{treves2006topological}, Chapter 13. LF stands for ``strict inductive limit of Fréchet spaces''. As it will appear in several places later on, we briefly describe the LF topology following \cite{treves2006topological}, although this is not necessary for understanding the article. Assume that a vector space $E$ can be written as $E = \bigcup_n E_n$ where $(E_n)$ is an increasing sequence of Fréchet spaces (i.e. metrizable complete locally convex topological vector spaces), such that the natural injection $E_n \rightarrow E_{n+1}$ is a linear homeomorphism over its range. The LF topology over $E$ is defined as follow: a convex set $V \subset E$ is a neighborhood of $0$ if and only if $V\cap E_n$ is a neighborhood of $0$ for all $n$. It is remarkable that LF topologies are \textit{not} metrizable except if for some $n_0, \ E_n = E_{n_0}$ for all $n\geq n_0$ (\cite{treves2006topological}, Remark 13.1). In return, this allows for some other very nice topological properties to hold, e.g., LF spaces are complete (\cite{treves2006topological}, Theorem 13.1).

For $\scrD(\calD)$, the LF topology is the one corresponding to the decomposition $\scrD(\calD) = \bigcup_i\scrD_{K_i}(\calD)$, where $\scrD_{K_i}(\calD):= \{\varphi \in C^{\infty}(\calD) : \text{Supp}(\varphi) \subset K_i\}$, and $(K_i)_{i \in \mathbb{N}}$ is an increasing sequence of compact subsets of $\calD$ such that $\bigcup_i K_i = \calD$ (\cite{treves2006topological}, pp. 131-133). This LF topology does not depend on the choice of $(K_i)_{i \in \mathbb{N}}$. An example of metric inducing the Fréchet topology of $\scrD_{K_i}(\calD)$ is the following:
\begin{align}\label{eq:def_di}
 d_i(\varphi,\psi) := \sup_{N \in \mathbb{N}}2^{-N} \frac{p_{N,i}(\varphi - \psi)}{1+p_{N,i}(\varphi - \psi)}, \ \ \ p_{N,i}(\varphi) := \max_{|\alpha|\leq N} \sup_{x\in K_i}|\partial^{\alpha}\varphi(x)|.
\end{align}
It is given in \cite{rudin1991}, Section 1.46 p. 34 and Remark 1.38$(c)$ p. 29. Explicitly, a sequence $(\varphi_n) \subset \scrD(\calD)$ converges to $\varphi \in \scrD(\calD)$ if there exists a compact set $K\subset \calD$ such that $\text{Supp}(\varphi_n) \subset K$ for all $n\in\mathbb{N}$ and for all $\alpha\in\mathbb{N}^d$, $||\partial^{\alpha}\varphi_n - \partial^{\alpha}\varphi||_{\infty} \rightarrow 0$ (\cite{rudin1991}, Theorem 6.5$(f)$ and the remark following p. 154).

We call generalized function any continuous linear form on $\scrD(\calD)$, i.e. any element of $\scrD(\calD)'$, the topological dual of $\scrD(\calD)$. We will rather denote it by $\scrD'(\calD)$ as in \cite{treves2006topological}, Notation 21.1. The topology of $\scrD(\calD)$ is such that $T \in \scrD'(\calD)$ if and only if for all compact set $K \subset \calD$, there exists $C_{K} > 0$ and a nonnegative integer $n_K$ such that
\begin{align}\label{eq:def distrib continue}
\forall \varphi \in \scrD(\calD)\ \  \text{such that} \ \text{Supp}(\varphi) \subset K,\ \  |T(\varphi)| \leq C_{K} \sum_{|\alpha|\leq n_K} ||\partial^{\alpha} \varphi||_{\infty}.
\end{align}
We recall that we use the following notations: for a multi-index $\alpha = (\alpha_1,...,\alpha_d) \in \mathbb{N}^d$, we denote $|\alpha| = \alpha_1+...+\alpha_d$ and $\partial^{\alpha} := (\partial_{x_1})^{\alpha_1}...(\partial_{x_d})^{\alpha_d}$ where $\partial_{x_i}^{\alpha_i}$ is the $\alpha_i^{th}$ derivative with reference to the $i^{th}$ coordinate $x_i$.
Generalized functions are also called ``distributions'', a terminology we will only use when there is no risk of confusion with probability distributions. The duality bracket will be denoted $\langle, \rangle$: for all $\varphi \in \scrD(\calD)$ and $T \in \scrD'(\calD)$, we have $\langle T, \varphi \rangle := T(\varphi)$. 

\subsubsection{Generalized functions and differentiation}\label{subsub:diff_of_distrib}
Any generalized function $T$ can be infinitely differentiated (\cite{rudin1991}, Section 6.12, p. 158 or \cite{treves2006topological}, pp. 248-250) according to the following definition
\begin{align}\label{eq:diff distrib}
\partial^{\alpha} T : \varphi \longmapsto \langle T, (-1)^{|\alpha|} \partial^{\alpha} \varphi \rangle.
\end{align}
The derivative $\partial^{\alpha} T$ is then also a continuous linear form over $\scrD(\calD)$, i.e. $\partial^{\alpha} T \in \scrD'(\calD)$.
\subsubsection{Regular generalized functions}\label{subsub:regular_distrib} Any function $f \in L^1_{loc}(\calD)$ can be injectively identified to a generalized function $T_f$ (\cite{treves2006topological}, p. 224 or \cite{rudin1991}, Section 6.11, p. 157) defined as follow
\begin{align}\label{eq:fL1 loc distrib}
\forall \varphi \in \scrD(\calD), \ \ \ \langle T_f, \varphi \rangle := \int_{\calD} f(x) \varphi(x) dx.
\end{align}
The map $L^1_{loc}(\calD) \ni f \longmapsto T_f$ is linear and injective; any generalized function $T$ that is of the form $T_f$ for some $f \in _{loc}^1(\calD)$ is said to be regular. Throughout this article, we will use the abusive notation $\langle T_f, \varphi \rangle = \langle f, \varphi \rangle$, as if $\langle, \rangle$ were the $L^2$ inner product.
Observe that equations \eqref{eq:diff distrib} and \eqref{eq:fL1 loc distrib} combined provide a flexible definition of the derivatives of any function $f \in L^1_{loc}(\calD)$ up to any order.
One also sees that \textit{weak} derivatives, as encountered in Sobolev spaces (\cite{brezis2010functional}, Section 9.1) and weak formulations of PDEs, are particular cases of distributional derivatives: given $\alpha\in\mathbb{N}^d$ and two locally integrable functions $f$ and $f_{\alpha}$, $f$ admits $f_{\alpha}$ for its $\alpha^{th}$ weak derivative if and only if $\partial^{\alpha}T_f = T_{f_{\alpha}}$. One then conveniently  writes $\partial^{\alpha}f = f_{\alpha}$ ($\partial^{\alpha}f$ is unique in $L_{loc}^1(\calD)$ from the injectivity of the mapping \eqref{eq:fL1 loc distrib}).

\subsubsection{Radon measures}\label{subsub:radon}
This subsection and the ones that follow are only necessary for dealing with the wave equation in Section \ref{Section:GP_wave}.
In this article, we call positive Radon measure any positive measure over $\calD$ that is Borel regular (\cite{evans2018measure}, Definition 1.9) and that has finite mass over any compact subset of $\calD$. A Radon measure is a linear combination of positive Radon measures. In \cite{lang1993}, Chapter 9, it is proved that
the space of Radon measures over $\calD$ is isomorphic to the space of continuous linear forms over $C_c(\calD)$, the space of compactly supported continuous functions on $\calD$ endowed with its usual LF-space topology described e.g. in \cite{treves2006topological}, pp. 131-133.
The corresponding isomorphism is given by
\begin{align}\label{eq:mu forme lin}
\mu \longmapsto T_{\mu} :
\begin{cases}
C_c(\calD) &\longrightarrow \mathbb{R}\\
\ \ f &\longmapsto \int_{\calD}f(x) \mu(dx). 
\end{cases}
\end{align}
We have the following facts. $(i)$ Any signed measure that admits a density $f$ with reference to the Lebesgue measure such that $f \in L^1_{loc}(\calD)$ is a Radon measure (\cite{treves2006topological}, p. 217). $(ii)$ Any Radon measure can be injectively identified to a generalized function $T_{\mu}$ by replacing $C_c(\calD)$ by $\scrD(\calD)$ in equation \eqref{eq:mu forme lin}. In particular, Radon measures can be differentiated up to any order through equation \eqref{eq:diff distrib}. $(iii)$ Any Radon measure $\mu$, can be uniquely written as $\mu = \mu^+ - \mu^-$ where $\mu^+$ and $\mu^-$ are positive Radon measures (\cite{lang1993}, Chapter 9). We then define its total variation by $|\mu| := \mu^+ + \mu^- $.

\subsubsection{Finite order generalized functions}\label{subsub:finite_order}
A generalized function $T$ is said to be of finite order if there exists a nonnegative integer $m$ such that one can take $n_K = m$, independently of $K$, in the definition of the continuity of $T$, i.e. equation \eqref{eq:def distrib continue}. The order of $T$ is then the smallest of those integers $m$. The space of generalized functions of order $m$ is isomorphic to $C_c^{m}(\calD)'$, the space of continuous linear forms over $C_c^{m}(\calD)$, when $C_c^{m}(\calD)$ is endowed with its usual LF-space topology (\cite{treves2006topological}, pp. 131-133).
The key property for us is that such generalized functions can be represented thanks to Radon measures. If $L$ is of order $m$, there exists a family of Radon measures $\{\mu_p\}_{|p|\leq m}$ over $\calD$ such that
\begin{align}\label{eq:finite order radon}
T = \sum_{|p|\leq m}\partial^p \mu_p,
\end{align}
where the equality in equation \eqref{eq:finite order radon} holds in $\scrD'(\calD)$ and $C_c^{m}(\mathcal{D)}'$ (\cite{treves2006topological}, p 259). Among the finite order generalized functions are those that are \textit{compactly supported}, i.e. those for which the measures $\mu_p$ such that $T = \sum_{|p|\leq m}\partial^p \mu_p$ all have compact support. 

\subsubsection{Convolution with generalized functions}\label{subsub:conv} As above, we consider $C_c^{m}(\mathbb{R}^d)$ endowed with its LF-space topology. Let $f \in C_c^{m}(\mathbb{R}^d)$ and $T \in C_c^{m}(\mathbb{R}^d)'$. Note $\tau_x f$ the function $y \longmapsto f(y-x)$ and $\check{f}$ the function $y \longmapsto f(-y)$. Then (\cite{treves2006topological}, p. 287, Section 27) one may define the convolution between $T$ and $f$ by
\begin{align}\label{eq:conv distrib}
T * f : x \longmapsto \langle T, \tau_{-x} \check{f} \rangle,
\end{align}
and $T * f$ is a function in the classical sense, i.e. defined pointwise. When $T$ is a regular generalized function, equation \eqref{eq:conv distrib} reduces to the usual convolution of functions through the identification defined in equation \eqref{eq:fL1 loc distrib}.
Similarly if $T$ is in fact a Radon measure $\mu$:
\begin{align}\label{eq:conv_mesure_func}
(T * f)(x) = \int_{\mathbb{R}^d} f(x-y)\mu(dy).
\end{align} 
More general definitions of generalized function convolution are available (\cite{treves2006topological}, Chapter 27) but this one is sufficient for our use.

\subsubsection{Tensor product of generalized functions}\label{subsub:tensor_gen_func} For two generalized functions $T_1 \in \scrD'(\calD_1)$ and $T_2 \in \scrD'(\calD_2)$, $T_1 \otimes T_2 \in \scrD'(\calD_1 \times \calD_2)$ denotes their tensor product(\cite{treves2006topological}, pp. 416-417), which is uniquely determined by the following tensor property:
\begin{align}
\forall \varphi_1 \in \scrD(\calD_1), \forall \varphi_2 \in \scrD(\calD_2), \ \langle T_1 \otimes T_2, \varphi_1 \otimes \varphi_2 \rangle = \langle T_1, \varphi_1 \rangle \times \langle T_2, \varphi_2 \rangle.
\end{align}
$T_1 \otimes T_2$ reduces to the tensor product of functions (respectively, measures) when $T_1$ and $T_2$ are functions (respectively, measures) through the identification of equation \eqref{eq:fL1 loc distrib} (respectively, equation \eqref{eq:mu forme lin}).

\section{Random fields under linear differential constraints}\label{Section:sto diff}
The results in this section state that under suitable assumptions over the first two moments of a given second order random field $U = (U(x))_{x\in \calD}$, sample path degeneracy properties with reference to differential constraints can be read on the first two moments of $U$, namely the mean function and the functions $k_x:y\longmapsto k(x,y)$, where $k$ is the covariance function of $U$. This is remarkable because the space induced by the sample paths of $U$ is a priori much larger than the space spanned by the functions $k_x, x\in \calD$. Moreover, the functions $k_x$ are ``accessible'', i.e. checking that these functions indeed verify the linear constraint can usually be done with direct computations.

We begin by recalling a result from \cite{ginsbourger2016} in the case of pointwise defined derivatives. We next state and prove a result similar to that of \cite{ginsbourger2016}, where we interpret the derivatives in the distributional sense.

\subsection{The case of classical derivatives}
We start by properly defining the notion of strong solutions of a PDE.
\begin{definition}[Strong/classical solutions]\label{def:strong_sol}
Let $L$ be a differential operator defined as in equation \eqref{eq:edp}, with continuous coefficients. We say that a function $u$ is a classical or strong solution to the PDE $L(u)=0$ if $u$ is $n$ times differentiable and $u$ verifies the PDE pointwise:
\begin{align}
\forall x \in \calD, \ \ \ L(u)(x) = \sum_{|\alpha| \leq n} a_{\alpha}(x) \partial^{\alpha}u(x) = 0.
\end{align}
\end{definition}
Note that the space of $n$ times differentiable functions does not have the nice topological properties of $C^n(\calD)$ and in most cases met in practice, one rather requires that strong solutions lie in $C^n(\calD)$. It is however in the sense of the definition \ref{def:strong_sol} that the theorem from \cite{ginsbourger2016} is best understood. This theorem, which we remind in Proposition \ref{prop : lin constraints classic}, is the one proved and used in \cite{ginsbourger2016} to build a Gaussian process whose sample paths are all strong solutions to the Laplace equation on a 2D circular domain. 

We first introduce some notations. Let $(U(x))_{x\in \calD}$ be a centered Gaussian process with covariance function $k$. Denote $\calF(\calD,\mathbb{R})$ the space of real-valued pointwise-defined functions on $\calD$ (often alternatively denoted $\mathbb{R}^{\mathcal{D}}$). We will only use $\calF(\calD,\mathbb{R})$ as a set, therefore we do not consider any topology over it. We refer to \cite{schwartz1964sous}, Section 9, for details on $\calF(\calD,\mathbb{R})$ seen as a topological vector space.
Denote $\calH_k$ the reproducing kernel Hilbert space (RKHS) associated to $k$ (see \cite{agnan2004}, Definition 1 p. 7 and Theorem 3, p. 19). $\calH_k$ is a Hilbert space of pointwise-defined functions (i.e. $\calH_k \subset \calF(\calD,\mathbb{R})$ as sets), such that the pointwise evaluation maps $l_x :f \mapsto f(x)$ are continuous functionals. Although belonging to $\calF(\calD,\mathbb{R})$ does not seem very restrictive at first glance, this clashes with the usual $L^p$ and Sobolev spaces encountered in PDE theory, which are sets of functions defined up to a set of null Lebesgue measure.

\begin{proposition}[sample paths of GPs under linear constraints \cite{ginsbourger2016}]\label{prop : lin constraints classic}
Let $\big(U(x)\big)_{x\in \calD}  \sim GP(0,k)$. Note for all $x \in \calD$ the function $k_x : y \longmapsto k(x,y)$. Let $E$ be a real vector space of functions defined on $\calD$ that contains the sample paths of $U$ almost surely and $L : E \longrightarrow \calF(\calD,\mathbb{R})$ be a linear operator. Assume that for all $x \in \calD,\ L(U)(x) \in \mathcal{L}(U)$, where $\mathcal{L}(U)$ is the closure of  $\text{Span}\{U(x), x \in \calD\}$ in $L^2(\mathbb{P})$. Then there exists a unique linear operator $\scrL : \calH_k \longrightarrow \calF(\calD,\mathbb{R})$ such that
\begin{align*}
    \forall x,y \in \calD,\ \ \ \mathbb{E}[L(U)(x)U(y)] = \scrL(k_{y})(x),
\end{align*}
and such that for all $x \in \calD, h \in \calH_k$ and sequence $(h_n) \subset \calH_k$ such that $ h_n \xrightarrow[]{} h$ for the topology of $\calH_k$, we have $ \ \scrL(h_n)(x) \longrightarrow \scrL(h)(x)$. Finally, the following statements are equivalent:
\begin{enumerate}[label=(\roman*)]
    \item $\mathbb{P}(L(U)= 0) = 1$.
    \item $\forall x \in \calD, \scrL(k_{x}) = 0$.
\end{enumerate}
\end{proposition}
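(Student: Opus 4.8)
The plan is to exploit the fact that $U$ is a second order random field together with the reproducing property of $\calH_k$, so that all relevant quantities can be compared inside $L^2(\mathbb{P})$. First I would construct the operator $\scrL$. The key observation is the standard isometry $\Psi$ between $\calH_k$ and $\mathcal{L}(U)$ sending $k_y \mapsto U(y)$ and extended by linearity and continuity; this is well defined because $\langle k_x, k_y\rangle_{\calH_k} = k(x,y) = \mathbb{E}[U(x)U(y)]$. Given $x \in \calD$, the hypothesis $L(U)(x) \in \mathcal{L}(U)$ lets me define $\scrL(h)(x) := \langle \Psi^{-1}(L(U)(x)), h\rangle_{\calH_k}$ for $h \in \calH_k$; equivalently, $\scrL(h)(x) = \mathbb{E}[L(U)(x)\, Z_h]$ where $Z_h = \Psi(h)$. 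Taking $h = k_y$ gives $Z_{k_y} = U(y)$ and hence the required identity $\mathbb{E}[L(U)(x)U(y)] = \scrL(k_y)(x)$. Linearity of $\scrL$ in $h$ is immediate from linearity of the inner product, and the continuity/convergence statement $\scrL(h_n)(x) \to \scrL(h)(x)$ whenever $h_n \to h$ in $\calH_k$ follows from Cauchy--Schwarz in $\calH_k$ (or in $L^2(\mathbb{P})$ via $\Psi$), since $|\scrL(h_n)(x) - \scrL(h)(x)| \le \|\Psi^{-1}(L(U)(x))\|_{\calH_k}\,\|h_n - h\|_{\calH_k}$. Uniqueness of $\scrL$ holds because $\mathrm{Span}\{k_y : y \in \calD\}$ is dense in $\calH_k$, so the values of $\scrL$ on the $k_y$ together with sequential continuity pin it down.

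Next I would prove $(i)\Rightarrow(ii)$. If $\mathbb{P}(L(U)=0)=1$, then for each fixed $x$ the random variable $L(U)(x)$ is almost surely zero, hence zero in $L^2(\mathbb{P})$, hence $\Psi^{-1}(L(U)(x)) = 0$ in $\calH_k$; therefore $\scrL(k_x)(y) = \mathbb{E}[L(U)(y)U(x)] = 0$ for all $y$, i.e. $\scrL(k_x) = 0$. (Here I use the symmetry $\mathbb{E}[L(U)(y)U(x)] = \scrL(k_x)(y)$, which is the defining identity with the roles of $x,y$ swapped.)

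The harder direction is $(ii)\Rightarrow(i)$, and this is where Gaussianity is essential. Assume $\scrL(k_x) = 0$ for all $x \in \calD$. Fix $x \in \calD$. Then $\mathbb{E}[L(U)(x)U(y)] = \scrL(k_y)(x) = 0$ for all $y \in \calD$, so $L(U)(x)$ is orthogonal in $L^2(\mathbb{P})$ to every $U(y)$, hence to all of $\mathcal{L}(U)$; but by hypothesis $L(U)(x) \in \mathcal{L}(U)$, so $L(U)(x) = 0$ almost surely. Thus for each fixed $x$ there is a probability-one event on which $L(U)(x) = 0$. The remaining obstacle — and I expect this to be the main subtlety — is to upgrade ``for each $x$, almost surely'' to ``almost surely, for all $x$'', i.e. to pass from a pointwise-in-$x$ null statement to a statement about the sample paths as elements of $\calF(\calD,\mathbb{R})$, without any separability or continuity assumption on $U$. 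The resolution must use the specific structure of $L$ and the space $E$: since $L : E \to \calF(\calD,\mathbb{R})$ and $\calF(\calD,\mathbb{R})$ carries no topology, one cannot invoke continuity, so the argument presumably reduces $L(U)(x)$, for each $x$, to a fixed countable collection of evaluations of $U$ (for a differential operator evaluated via limits of difference quotients along rational increments, this is the case), makes each such evaluation hold simultaneously on one null set, and concludes. I would handle this by assuming, as the proposition implicitly does through its intended applications, that the construction of $L(U)(x)$ from $U$ only ever involves countably many sample-path values, so that a single countable intersection of probability-one events gives $\mathbb{P}(L(U) = 0) = 1$; this last step also uses completeness of $(\Omega,\calA,\mathbb{P})$ to ensure $\{L(U)=0\}$ is measurable, as noted in Section~\ref{subsub:def}.
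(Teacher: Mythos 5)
Your construction of $\scrL$ goes through the Lo\`eve isometry $\Psi$ between $\calH_k$ and $\mathcal{L}(U)$, which is exactly the route the paper points to (the paper does not reprove this cited result; it only notes that the proof of Proposition \ref{prop : lin constraints classic} relies on that isometry). The existence, the identity $\mathbb{E}[L(U)(x)U(y)]=\scrL(k_y)(x)$, the Cauchy--Schwarz continuity, the uniqueness via density of $\mathrm{Span}\{k_y\}$, the direction $(i)\Rightarrow(ii)$, and the orthogonality argument giving, for each fixed $x$, $L(U)(x)=0$ almost surely, are all correct and constitute the heart of the cited proof.

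The genuine gap is the last step, and you have in effect conceded it rather than closed it: passing from ``for each $x\in\calD$, $L(U)(x)=0$ a.s.'' to ``$\mathbb{P}(L(U)=0)=1$'' requires controlling an uncountable union of null sets (the field $x\mapsto L(U_\omega)(x)$ is only shown to be a \emph{modification} of the zero field, which does not imply indistinguishability without separability- or continuity-type input; think of a field whose value at each $x$ is a.s.\ zero but whose exceptional null set moves with $x$). Your proposed fix --- assuming that $L(U)(x)$ is built from only countably many sample-path evaluations --- is a hypothesis that is not in the statement, and even granting it the sketch does not conclude: the countable family of evaluation points depends on $x$ (for a differential operator it includes $x$ itself and points accumulating at $x$), so one still faces uncountably many events unless one additionally has, e.g., a.s.\ continuity of $x\mapsto L(U_\omega)(x)$ together with a countable dense set, or some other countable determining structure supplied by the framework of the cited reference. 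This is precisely the point where the paper's own main result, Proposition \ref{prop :diff_constraints_distrib}, differs: in the distributional setting the countable reduction is canonical, obtained from the sequential separability of $\scrD(\calD)$ (Lemma \ref{lemma:d_separable}) combined with the sequential continuity of $\varphi\longmapsto\langle U_\omega,L^*\varphi\rangle$ for each fixed $\omega$ with $U_\omega\in L^1_{loc}(\calD)$; no pointwise analogue of that mechanism is available in the setting of Proposition \ref{prop : lin constraints classic}, so the step you flagged cannot be waved through and must be supplied by the structure assumed on $L$ and on the sample paths.
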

A sufficient condition ensuring that the sample paths of a GP lie in $C^n(\calD)$ is found in \cite{adler2007}, Theorem 1.4.2. More broadly, both necessary and sufficient conditions over the first two moments of a GP for its sample path  to be (Hölder) continuous are well-known: see e.g. \cite{adler1981}, Theorems 3.3.3 and 8.3.2. 

The proof of Proposition \ref{prop : lin constraints classic} heavily relies on the Loève isometry (\cite{agnan2004}, Theorem 35, p. 65) between the two Hilbert spaces $\calH_k$ and $\mathcal{L}(U)$ (see Section \ref{subsub:second_GP} for details on $\mathcal{L}(U)$). This theorem can be applied when $L$ is a differential operator as discussed in \cite{ginsbourger2016}. However, in Proposition \ref{prop : lin constraints classic}, the differential operator $L$ of order $n$ has to be valued in $\calF(\calD,\mathbb{R})$; in particular for $u \in E$, the function $L(u)$ has to be defined pointwise in order to use the Loève isometry. To summarize, in all generality the derivatives in $L$ have to be understood in a classical sense and $E$ has to be contained in $\scrD^n(\calD)$, the space of $n$ times differentiable functions on $\calD$. Requiring that $E \subset \scrD^n(\calD)$ is a very strong assumption with reference to the sample paths of $U$; furthermore, this is not compliant with the usual way of studying PDEs where derivatives are understood in a weaker sense. We present in Proposition \ref{prop :diff_constraints_distrib} an adaptation of Proposition \ref{prop : lin constraints classic} where the derivatives are understood in the distributional sense. By transferring all the derivatives on the test function, we will be liberated from any differentiability assumptions over the sample paths of $U$, effectively replacing $\scrD^n(\calD)$ with $L_{loc}^1(\calD)$. Finally, the random field $U$ will not be assumed Gaussian and will only be required to be measurable second order.

\subsection{The case of distributional derivatives}
\subsubsection{Distributional solutions of PDEs}
In this section, we elaborate a bit more on the notion of distributional solutions to a given PDE. Let $L = \sum_{|\alpha| \leq n} a_{\alpha}(x) \partial^{\alpha}$ be a linear differential operator, and assume for the moment that its coefficients are infinitely differentiable. We briefly recall the steps described in the introduction that lead to the definition of distributional solutions presented in equation \eqref{eq:def_sol_distribs}. Start from a strong solution $u$  of class $C^n$ of $L(u)=0$, multiply this PDE by a test function $\varphi \in \scrD(\calD)$, integrate over $\calD$ and perform $|\alpha|$ integration by parts to transfer all derivatives from $u$ to $\varphi$. Since the support of $\varphi$ is a compact subset of the open set $\calD$, the boundary terms of each integration by parts vanish, leading to
\begin{align}\label{eq:def_sol_distrib_2}
\forall \varphi \in \scrD(\calD), \ \ \ \int_{\calD}u(x)\sum_{|\alpha| \leq n}(-1)^{|\alpha|} \partial^{\alpha}(a_{\alpha}\varphi)(x)dx = 0.
\end{align}
Following equation \eqref{eq:def_sol_distrib_2} we introduce $L^*$, the formal adjoint of $L$, acting on $\scrD(\calD)$, defined by the following formula (\cite{treves2006topological}, pp. 247-249)
\begin{align}\label{eq:def_Lstar}
L^* : \varphi \longmapsto \sum_{|\alpha| \leq n}(-1)^{|\alpha|} \partial^{\alpha}(a_{\alpha}\varphi).
\end{align}
Note that for equation \eqref{eq:def_sol_distrib_2} to be well defined, the assumptions that $u \in L_{loc}^1(\calD)$ and $a_{\alpha} \in C^{|\alpha|}(\calD)$ are sufficient. More precisely, these assumptions are enough to show that the map $L(u)$ defined by duality
\begin{align}\label{eq:def_TU_distrib}
L(u): \begin{cases}
\scrD(\calD) &\longrightarrow \mathbb{R} \\
\varphi &\longmapsto \int_{\calD}L^*(\varphi)(x)u(x)dx
\end{cases}
\end{align}
defines a continuous linear form over $\scrD(\calD)$, i.e. $L(u) \in \scrD'(\calD)$ (see equations \eqref{eq:def distrib continue} and \eqref{eq:control inf} for a rigorous proof of this statement). This definition extends the definition of distributional derivatives from Section \ref{subsub:diff_of_distrib} to differential operators.
By construction, $L$ and $L^*$ verify a duality identity: given $\varphi \in \scrD(\calD)$ and $u\in L_{loc}^1(\calD), \ \langle L(u),\varphi\rangle = \langle u,L^*(\varphi)\rangle$.

As in Section \ref{subsub:regular_distrib}, the assumption that $u \in L_{loc}^1(\calD)$ is in fact a \textit{continuity} assumption over the associated linear form $L(u)$ (a more general and theoretical analysis of such observations can be found in \cite{treves2006topological}, pp. 247-251). This finally leads to the following definition, following e.g.  \cite{Duistermaat2010}, p. 10:
\begin{definition}[Distributional solutions]\label{def:distrib_sol}
A function $u \in L_{loc}^1(\calD)$ is said to be a solution to the PDE $L(u) = 0$ in the sense of distributions if $L(u) = 0$ in $\scrD'(\calD)$, i.e. when $L(u)$ is seen as en element of $\scrD'(\calD)$ through equation \eqref{eq:def_TU_distrib} and $0$ is the null linear form over $\scrD(\calD)$.
\end{definition}
As weak derivatives are a particular case of distributional derivatives (Section \ref{subsub:regular_distrib}), one expects that the distributional solutions of a PDE that admit some weak derivatives are in fact weak solutions, i.e. solutions of some weak formulation of that PDE. Rigorous statements of this general fact have to be checked on a case-by-case basis, depending on the weak formulation at hand (a more in-depth discussion falls outside of the scope of this article). As an example, this is the case for the weak formulation of elliptic PDEs in $H_0^1(\calD)$ (see e.g. \cite{evans1998}, Section 6.2), where $H_0^1(\calD)$ is the closure of $\scrD(\calD)$ in the Sobolev space $H^1(\calD) := \{u\in L^2(\calD): \nabla u \text{ exists as a weak derivative and } \nabla u \in L^2(\calD)^d\}$.

\begin{remark}[Measure-valued solutions of PDEs]\label{rk:meas_val_sol}
Although it is not the main focus of the paper, we can even allow $u$ in Definition \ref{def:distrib_sol} to be a Radon measure by replacing $u(x)dx$ with $\mu(dx)$ in equation \eqref{eq:def_TU_distrib}. This will be useful in Section \ref{sub:sol_3D_wave}, where we will encounter a  measure-valued PDE solution which is central from a physical viewpoint, with the wave equation's Green's function (it is \textit{not} actually a function!). Notice that weak formulations in Sobolev spaces, say $H^1(\calD)$, are not well-equipped to work with such solutions, and our distributional framework becomes needed. \end{remark}

\subsubsection{Random fields under distributional differential constraints} We can now state the following proposition, based on Definition \ref{def:distrib_sol}.
\begin{proposition}[sample paths of random fields under linear differential constraints, distributional derivatives] \label{prop :diff_constraints_distrib} Let $\calD \subset \mathbb{R}^d$ be an open set and let $L = \sum a_{\alpha}(x) \partial^{\alpha}, \ {|\alpha| \leq n},$ be a linear differential operator of order $n$ with coefficients $a_{\alpha}(x) \in C^{|\alpha|}(\calD)$. Let $U = \big(U(x)\big)_{x\in \calD}$ be a measurable second order random field with mean function $m(x)$ and covariance function $k(x,x')$. For all $x \in \calD$, note ${k_{x} : y \longmapsto k(x,y)}$. Suppose that $m \in L^1_{loc}(\calD)$ and $\sigma\in L_{loc}^1(\calD)$, where ${\sigma}: x \mapsto k(x,x)^{1/2}$. \\
1) Then $\mathbb{P}(U \in L_{loc}^1(\calD)) = 1$ and for all $x \in \calD, k_x \in L_{loc}^1(\calD)$. \\
2) Suppose that $L(m) = 0$ in the sense of distributions. Then the following statements are equivalent:
\begin{enumerate}[label=(\roman*)]
    \item $\mathbb{P}(L(U) = 0 \text{ in the sense of distributions}) = 1$.
    \item $\forall x \in \calD,\ L(k_{x}) = 0$ in the sense of distributions.
\end{enumerate}
\end{proposition}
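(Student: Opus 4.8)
For part 1 the plan is a Tonelli argument. Joint measurability of $U$ first gives, for $\mathbb{P}$-almost every $\omega$, measurability of the section $U_\omega$. For a compact $K\subset\calD$, Tonelli's theorem yields $\mathbb{E}\big[\int_K|U(x)|\,dx\big]=\int_K\mathbb{E}[|U(x)|]\,dx$, and Jensen's (equivalently the scalar Cauchy--Schwarz) inequality gives $\mathbb{E}[|U(x)|]\le |m(x)|+\sigma(x)$, which is integrable on $K$ since $m,\sigma\in L^1_{loc}(\calD)$; hence $\int_K|U_\omega|\,dx<\infty$ a.s. Exhausting $\calD$ by a countable increasing sequence of compacts and discarding a countable union of null sets gives $\mathbb{P}(U\in L^1_{loc}(\calD))=1$. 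For the functions $k_x$, the Cauchy--Schwarz inequality \eqref{eq:CS PD} gives $|k(x,y)|\le\sigma(x)\sigma(y)$, so $\int_K|k_x(y)|\,dy\le\sigma(x)\int_K\sigma<\infty$; measurability of $k_x$ follows from Fubini's theorem applied to $(\omega,y)\mapsto(U(x)-m(x))(U(y)-m(y))$, which is $\mathbb{P}\otimes dy$-integrable on $\Omega\times K$ by the same bound.

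For part 2 the central object is, for each $\varphi\in\scrD(\calD)$, the random variable $Z_\varphi:=\langle L(U),\varphi\rangle=\int_\calD L^*(\varphi)(x)\,U(x)\,dx$, where $L^*$ is the formal adjoint \eqref{eq:def_Lstar}. Leibniz's rule together with $a_\alpha\in C^{|\alpha|}(\calD)$ shows that $L^*\varphi$ is continuous with support in $K:=\text{Supp}\,\varphi$ and satisfies a bound $\|L^*\varphi\|_\infty\le C_K\sum_{|\gamma|\le n}\|\partial^\gamma\varphi\|_\infty$; combined with part 1 this makes $Z_\varphi$ a.s.\ well defined, measurable, and — using $|k(x,y)|\le\sigma(x)\sigma(y)$ and $m\in L^1_{loc}(\calD)$ — square-integrable. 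Two Fubini computations, each legitimate thanks to the $L^1_{loc}$ bounds just mentioned, drive everything: first, $\mathbb{E}[Z_\varphi]=\int_\calD L^*(\varphi)m\,dx=\langle L(m),\varphi\rangle=0$ by hypothesis, so $Z_\varphi$ is centered and equals $\int_\calD L^*(\varphi)(x)(U(x)-m(x))\,dx$ a.s.; second, for every $y\in\calD$,
\begin{equation*}
\text{Cov}\big(Z_\varphi,U(y)\big)=\int_\calD L^*(\varphi)(x)\,k(x,y)\,dx=\langle L(k_y),\varphi\rangle. \tag{$\star$}
\end{equation*}

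Given $(\star)$, the implication $(i)\Rightarrow(ii)$ is immediate: if $L(U)=0$ in $\scrD'(\calD)$ a.s., then $Z_\varphi=0$ a.s.\ for each fixed $\varphi$, so the left-hand side of $(\star)$ vanishes for all $y$, i.e.\ $\langle L(k_y),\varphi\rangle=0$ for all $y,\varphi$, which is $(ii)$. For $(ii)\Rightarrow(i)$, fix $\varphi$; using the centered expression for $Z_\varphi$ and one more Fubini step to pull the integral out of the covariance,
\[
\text{Var}(Z_\varphi)=\text{Cov}\Big(Z_\varphi,\int_\calD L^*(\varphi)(y)(U(y)-m(y))\,dy\Big)=\int_\calD L^*(\varphi)(y)\,\text{Cov}\big(Z_\varphi,U(y)\big)\,dy=\int_\calD L^*(\varphi)(y)\,\langle L(k_y),\varphi\rangle\,dy,
\]
which is $0$ under $(ii)$; since $Z_\varphi$ is centered this forces $Z_\varphi=0$ a.s.

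It then remains to upgrade ``$Z_\varphi=0$ a.s.\ for each $\varphi$'' to ``a.s., $Z_\varphi=0$ for every $\varphi\in\scrD(\calD)$'', and I expect this to be the main obstacle, since $\scrD(\calD)$ is not metrizable and a naive countable-intersection argument does not immediately suffice. The plan is to exhaust $\calD$ by compacts $K_i$ with $K_i$ contained in the interior of $K_{i+1}$, choose a countable dense subset $D_i$ of each metrizable Fr\'echet space $\scrD_{K_i}(\calD)$, and set $D=\bigcup_i D_i$, still countable. Off a single null set one has both $U_\omega\in L^1_{loc}(\calD)$ and $Z_\psi(\omega)=0$ for all $\psi\in D$; for arbitrary $\varphi\in\scrD(\calD)$, pick $i$ with $\text{Supp}\,\varphi\subset K_i$ and $\psi_n\in D_i$ with $\psi_n\to\varphi$ in $\scrD_{K_i}(\calD)$, so that $\|L^*\psi_n-L^*\varphi\|_\infty\to0$ with all supports in $K_i$, whence $|Z_{\psi_n}(\omega)-Z_\varphi(\omega)|\le\|L^*\psi_n-L^*\varphi\|_\infty\int_{K_i}|U_\omega|\to0$ and therefore $Z_\varphi(\omega)=0$. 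Thus a.s.\ $L(U_\omega)=0$ in $\scrD'(\calD)$, which is $(i)$. Beyond this density/continuity step, the only care needed is the bookkeeping for the three Fubini applications, which is routine given $m,\sigma\in L^1_{loc}(\calD)$ and $|k(x,y)|\le\sigma(x)\sigma(y)$.
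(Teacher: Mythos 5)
Your proposal is correct and follows essentially the same route as the paper: part 1 is the same Tonelli/Cauchy--Schwarz argument, and part 2 rests on the same two ingredients, namely fixed-$\varphi$ second-moment Fubini computations (your $(\star)$ and the variance identity are just a repackaging of the paper's two implications) and an upgrade to a single null set via countable dense subsets of the spaces $\scrD_{K_i}(\calD)$ together with the estimate of type \eqref{eq:control inf}, which is exactly the content of Lemma \ref{lemma:d_separable} and its use in the paper. The only cosmetic differences are that you center $Z_\varphi$ inline using $L(m)=0$ instead of reducing to the centered case at the end, and that you assert (rather than prove, as the paper does via the Montel property) the separability of each Fr\'echet space $\scrD_{K_i}(\calD)$, a standard fact.
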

Explicitly, by $(i)$ we mean that there exists a set $A \in \calA$ with $\mathbb{P}(A) = 1$ such that for all $\omega \in A$,
\begin{align}\label{eq:meaning i}
 \forall \varphi \in \scrD(\calD),\ \ \langle U_{\omega}, L^*\varphi \rangle = \int_{\calD}U_{\omega}(x)L^*\varphi(x)dx = 0  .
\end{align}
The fact that the functions $x \longmapsto U_{\omega}(x)$ and $y \longmapsto k_x(y)$ lie in $L^1_{loc}(\calD)$ ensure the existence of the integrals in equations \eqref{eq:meaning i} (see Point 2 of the proof of Proposition \ref{prop :diff_constraints_distrib}) as well as the continuity of the associated linear forms over $\scrD(\calD)$, following the definition of equation \eqref{eq:def_TU_distrib}. The assumption that $a_{\alpha}\in C^{|\alpha|}(\calD)$ is not very strong, in the sense that it is the minimal assumption to ensure that the adjoint $L^*$ is well-defined (equation \eqref{eq:def_Lstar}), and thus that Definition \ref{def:distrib_sol} even makes sense. Likewise, requiring that $\sigma \in L_{loc}^1(\calD)$ is not very restrictive (see Section \ref{subsub:L1loc_test}). However, ensuring the measurability of the random process $U$ is more demanding in practice, because it is difficult to ensure this property outside of having continuity in probability (see Section \ref{subsub:sto_pro_mes}).

The following lemma will be crucial for the proof of Proposition \ref{prop :diff_constraints_distrib}:
\begin{lemma}\label{lemma:d_separable}
$\scrD(\calD)$ is sequentially separable, i.e. there exists a countable subset $F \subset \scrD(\calD)$ such that for all $\varphi\in\scrD(\calD)$, there exists a sequence $(\varphi_n)\subset F$ such that $\varphi_n\rightarrow\varphi$ in $\scrD(\calD)$ for its LF topology.
\end{lemma}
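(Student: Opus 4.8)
The plan is to reduce the statement to the separability of each Fréchet space $\scrD_{K_i}(\calD)$ in the decomposition $\scrD(\calD)=\bigcup_i \scrD_{K_i}(\calD)$ recalled in Section \ref{subsub:gen_func}, and then to exhibit an explicit countable dense subset of each $\scrD_{K_i}(\calD)$ built from a single fixed cutoff function and polynomials with rational coefficients. The point is that a sequence converging in the metric $d_i$ of $\scrD_{K_i}(\calD)$ has all its supports in the fixed compact $K_i$ and all its derivatives converging uniformly, which is exactly the characterization of convergence in the LF topology of $\scrD(\calD)$ recalled just after \eqref{eq:def_di}; so any countable set that is $d_i$-dense in $\scrD_{K_i}(\calD)$ will also serve, via such sequences, in $\scrD(\calD)$.

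Concretely, I would fix an exhausting sequence $(K_i)$ with $K_i$ contained in the interior of $K_{i+1}$, and for each $i$ fix once and for all a function $\eta_i\in\scrD(\calD)$ with $\eta_i\equiv 1$ on a neighbourhood of $K_i$ and $\text{Supp}(\eta_i)\subset K_{i+1}$, together with a closed box $B_i\supset K_{i+1}$. Let $\mathcal P_{\mathbb Q}$ be the countable set of polynomials in $d$ variables with rational coefficients, and set $F_i:=\{\eta_i P : P\in\mathcal P_{\mathbb Q}\}$ and $F:=\bigcup_i F_i$, a countable subset of $\scrD(\calD)$. Given $\varphi\in\scrD(\calD)$, its support is a compact subset of $\calD=\bigcup_i K_i$, hence $\text{Supp}(\varphi)\subset K_i$ for some $i$; in particular $\eta_i\varphi=\varphi$.

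Next I would invoke the classical fact that polynomials are dense in $C^m(B_i)$ for every $m$ (for instance through multivariate Bernstein polynomials, whose derivatives up to order $m$ converge uniformly), combine it with a diagonal argument and a harmless rounding of the finitely many coefficients, and thereby produce a sequence $P_n\in\mathcal P_{\mathbb Q}$ with $\|\partial^\alpha(\varphi-P_n)\|_{L^\infty(B_i)}\to 0$ for every multi-index $\alpha$. Writing $\partial^\alpha(\eta_i P_n-\varphi)=\partial^\alpha(\eta_i(P_n-\varphi))$ and expanding by the Leibniz rule, every term is a bounded derivative of $\eta_i$ times a derivative of $P_n-\varphi$ that tends to $0$ uniformly on $B_i\supset\text{Supp}(\eta_i)$; hence $\partial^\alpha(\eta_i P_n)\to\partial^\alpha\varphi$ uniformly on $\mathbb R^d$ for each $\alpha$. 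Since all $\eta_i P_n$ are supported in the fixed compact $K_{i+1}$, the recalled characterization of convergence in $\scrD(\calD)$ yields $\eta_i P_n\to\varphi$ in $\scrD(\calD)$, and $\eta_i P_n\in F_i\subset F$; this proves sequential separability.

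The only genuinely analytic ingredient, and hence the step to be careful about, is the $C^m$-density of polynomials on a box simultaneously for all $m$ (handled by the diagonal sequence); the rest is bookkeeping with the Leibniz rule and with the explicit description of the LF topology of $\scrD(\calD)$. One could phrase the same argument more abstractly: $C^\infty(\mathbb R^d)$ is a separable Fréchet space, each $\scrD_{K_i}(\calD)$ is a linear subspace carrying the induced topology, a subspace of a separable metric space is separable, and a countable $d_i$-dense subset of $\scrD_{K_i}(\calD)$ automatically produces the desired approximating sequences in the coarser LF topology of $\scrD(\calD)$ because all supports stay inside $K_i$.
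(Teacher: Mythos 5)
Your proposal is correct, and its skeleton is the same as the paper's: decompose $\scrD(\calD)=\bigcup_i\scrD_{K_i}(\calD)$, find a countable $d_i$-dense subset of each piece, take the countable union, and use the recalled characterization of LF convergence (fixed compact support plus uniform convergence of all derivatives) to upgrade $d_i$-convergence to convergence in $\scrD(\calD)$. Where you differ is in how separability of each Fréchet piece is obtained. The paper argues abstractly: $C^{\infty}(\calD)$ is a metrizable Montel space, hence separable, and a subset of a separable metric space is separable, so each $\scrD_{K_i}(\calD)$ inherits a countable dense set without any explicit construction. You instead build the dense set by hand as $\{\eta_i P: P\in\mathcal P_{\mathbb Q}\}$ with a fixed cutoff $\eta_i$ and rational-coefficient polynomials, relying on the classical $C^m$-density of polynomials on a box (with derivatives up to order $m$ converging uniformly, e.g.\ via multivariate Bernstein polynomials), a diagonal argument over $m$, rational rounding of the coefficients, and the Leibniz rule; implicitly you also extend $\varphi$ by zero to the box, which is legitimate since $\mathrm{Supp}(\varphi)\subset K_i\subset\calD$. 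Your route is more elementary and constructive — it exhibits an explicit countable dense family and avoids invoking the Montel property and separability-of-subspaces facts — at the cost of the $C^m$ polynomial-approximation ingredient and some bookkeeping (choice of $K_i\subset\mathring K_{i+1}$, existence of the cutoffs, coefficient rounding), all of which are standard and correctly identified by you as the only delicate point. Your closing remark phrasing the argument abstractly is essentially the paper's proof, so the two approaches are complementary rather than conflicting.
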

Recall that a topological space $E$ is separable if there exists a countable subset $F\subset E$ such that its closure in $E$ is equal to $E$. If the topology of $E$ is metrizable (as e.g. for Fréchet spaces), sequential separability and separability are equivalent. If this topology is \textit{not} metrizable (as e.g. for LF spaces), then sequential separability implies separability but the converse need not hold. Below, we provide a short proof of Lemma \ref{lemma:d_separable}, as we could not find it in the literature. The weaker property that $\scrD(\calD)$ is separable is already difficult to track down, see e.g. \cite{gapaillard8processus}, Corollaire (1).2, p. 78 or \cite{gelfand1964generalized}, p. 73, (3).
\begin{proof}
We first show that the spaces $\scrD_{K_i}(\calD)$ introduced in Section \ref{subsub:gen_func} are separable Fréchet spaces. The Fréchet topology of $\scrD_{K_i}(\calD)$ is the one induced by the usual Fréchet topology of $C^{\infty}(\calD)$ when $\scrD_{K_i}(\calD)$ is seen as a subspace of $C^{\infty}(\calD)$ (\cite{treves2006topological}, pp. 131-132). 
As a Fréchet space, $C^{\infty}(\calD)$ is metrizable (\cite{treves2006topological}, p. 85). But $C^{\infty}(\calD)$ is also a Montel space (\cite{treves2006topological}, Proposition 34.4, p. 357): as a metrizable Montel space, it is automatically separable (\cite{schaefer1999}, p. 195 or \cite{dieudonne_separable}). Thus $\scrD_{K_i}(\calD)$ is also separable as a subset of the separable metrizable space $C^{\infty}(\calD)$ (\cite{brezis2010functional}, Proposition 3.25, p. 73).

Denote now $F_i$ a countable dense subset of $\scrD_{K_i}(\calD)$ and consider $F:= \bigcup_{i\in\mathbb{N}}F_i$. Let $\varphi\in \scrD(\calD)$ and $i\in\mathbb{N}$ such that $\text{Supp}(\varphi) \subset K_i$, where $(K_i)_{i\in\mathbb{N}}$ is the sequence of compact sets from Section \ref{subsub:gen_func}. Then $\varphi \in \scrD_{K_i}(\calD)$ and there exists a sequence $(\varphi_n) \subset F_i \subset F$ such that $\varphi_n \rightarrow \varphi$ in the sense of the Fréchet topology of $\scrD_{K_i}(\calD)$, i.e. the metric $d_i$ in equation \eqref{eq:def_di}. From equation \eqref{eq:def_di}, $||\partial^{\alpha}\varphi_n-\partial^{\alpha}\varphi||_{\infty} \rightarrow 0$ for all $\alpha\in \mathbb{N}^d$. Since $\text{Supp}(\varphi_n) \subset K_i$ for all $n\in\mathbb{N}$, we have that $\varphi_n\rightarrow\varphi$ in $\scrD(\calD)$ (see Section \ref{subsub:gen_func}).
\end{proof}
We are now able to prove Proposition \ref{prop :diff_constraints_distrib}.
\begin{proof}
Suppose first that $U$ is centered, i.e. $m \equiv 0$. \\
1) We begin by showing that the sample paths of $U$ almost surely lie in $L^1_{loc}(\calD)$. Note first that thanks to the Cauchy-Schwarz inequality, $\mathbb{E}[|U(x)|] \leq \sigma(x)$.
Now, let $(K_i)_{i \in \mathbb{N}}$ be an increasing sequence of compact subsets of $\calD$ such that $\bigcup_{i \in \mathbb{N}} K_i = \calD$. Using Tonelli's theorem, we have that for any $n \in \mathbb{N}$,
\begin{align}\label{eq:int_finie_kn}
\mathbb{E}\bigg[\int_{K_i}|U(x)|dx\bigg] = \int_{K_i}\mathbb{E}[|U(x)|]dx \leq \int_{K_i} \sigma(x)dx < +\infty,
\end{align}
since $\sigma \in L^1_{loc}(\calD)$. Note that in order for the integrals above to be well defined, imposing that $U$ is a measurable random field cannot be circumvented. Equation \eqref{eq:int_finie_kn} yields a set $B_n \subset \Omega$ of probability 1 over which the random variable
$\omega \longmapsto \int_{K_i}|U_{\omega}(x)|dx$ is finite (from Fubini's theorem again, the map $\omega \longmapsto \int_{K_i}|U_{\omega}(x)|dx$ is measurable). Consider now the set $B = \bigcap_{n \in \mathbb{N}}B_n$ which remains of probability 1. For all compact subset $K \subset \calD$, there exists an integer $n_K$ such that $K \subset K_{n_K}$ and thus for all $\omega \in B$,
\begin{align}\label{eq:u_w_L1loc}
\int_K |U_{\omega}(x)|dx \leq \int_{K_{n_K}} |U_{\omega}(x)|dx < + \infty,
\end{align}
which shows that the sample paths of $U$ lie in $L^1_{loc}(\calD)$ almost surely. Similarly, we check that for all $x \in \calD,  k_{x}$ lies in $L^1_{loc}(\calD)$: for any compact set $K$, since $\sigma \in  L^1_{loc}(\calD)$ and because of equation \eqref{eq:CS PD},

\begin{align*}
\int_K |k_{x}(y)|dy = \int_K |k(x,y)|dy \leq \sigma(x) \int_K \sigma(y)dy < \infty.
\end{align*}

2) Let us check in advance that whatever $f \in L^1_{loc}(\calD)$, the map $T(f): \varphi \longmapsto \langle f, L^*\varphi \rangle$ is a continuous linear form over $\scrD(\calD)$. Since $a_{\alpha} \in C^{|\alpha|}(\calD)$, we can apply Leibniz' rule on $L^*\varphi = \sum_{|\alpha|\leq n} (-1)^{|\alpha|}\partial^{\alpha}(a_{\alpha}\varphi)$. This yields a family $\{f_{\alpha}\}_{|\alpha| \leq n}$ of continuous functions over $\calD$ such that
\begin{align}\label{eq:les f_alpha}
\forall \varphi \in \scrD(\calD), \ \ \forall x \in \calD,\ \ L^*\varphi(x) = \sum_{|\alpha|\leq n}f_{\alpha}(x) \partial^{\alpha}\varphi(x).
\end{align}
For all $f \in L^1_{loc}(\calD)$, for all compact set $K \subset \calD$ and for all $\varphi \in \scrD(\calD)$ such that $\text{Supp}(\varphi) \subset K$, we have $\text{Supp}(L^*\varphi) \subset K$ and equation \eqref{eq:les f_alpha} yields
\begin{align}\label{eq:control inf} 
|\langle f, L^*\varphi \rangle| &\leq \int_{\calD}|f(x)||L^*\varphi(x)|dx \nonumber \\ 
&\leq \bigg( \int_{K}|f(x)|dx \times \max_{|\alpha|\leq n} \sup_{x \in K} |f_{\alpha}(x)|\bigg) \times \sum_{|\alpha| \leq n} ||\partial^{\alpha}\varphi ||_{\infty} < +\infty.
\end{align}
This proves that $T(f) : \varphi \longmapsto \langle f, L^*\varphi \rangle$ is a continuous linear form over $\scrD(\calD)$ (see equation \eqref{eq:def distrib continue}).

\underline{$(i) \implies (ii)$}: Suppose $(i)$. Let $\varphi \in \scrD(\calD)$. There exists a  set $A\subset \Omega$ such that $\mathbb{P}(A) = 1$ and such that
\begin{align*}
    \forall \omega \in A, \ \ \ \langle U_{\omega},L^*\varphi\rangle = \int_{\calD}U_{\omega}(x)L^*\phi(x)dx = 0.
\end{align*} 
Multiplying equation above with the random variable $U(x')$, taking the expectation and formally permuting (for now) the integral and the expectation, we obtain
\begin{align*}
   0 &= \mathbb{E}\bigg[U(x')\int_{\calD}U(x)L^*\varphi(x)dx\bigg] = \int_{\calD}L^*\varphi(x)\mathbb{E}[U(x)U(x')]dx \\
     &= \int_{\calD}L^*\varphi(x)k(x,x')dx = \langle k_{x'},L^*\varphi \rangle.
\end{align*}
The integral-expectation permutation is justified by writing down the expectation as an integral and using Fubini's theorem, checking that the below quantity is finite. We use Tonelli's theorem and the Cauchy-Schwarz inequality:
\begin{align*}
    \mathbb{E}\bigg[\int_{\calD}|U(x')U(x)L^*\varphi(x)|dx\bigg] &= \int_{\calD}|L^*\varphi(x)|\mathbb{E}[|U(x)U(x')|]dx\\ 
    &\leq \int_{\calD}|L^*\varphi(x)|\mathbb{E}[U(x)^2]^{1/2}\mathbb{E}[U(x')^2]^{1/2}dx \\
    &\leq \sigma(x')\int_{\calD}|L^*\varphi(x)|\sigma(x) dx < +\infty.
\end{align*}
Indeed, since $\sigma \in L^1_{loc}(\calD)$, setting $f = \sigma$ in equation \eqref{eq:control inf} shows that the last integral is indeed finite.
Thus, $\forall x \in \calD, \forall \varphi \in \scrD(\calD), \langle k_{x}, L^*\varphi \rangle = 0$ which proves that $(i) \implies (ii)$. \\
\underline{$(ii) \implies (i)$}: Suppose $(ii)$. Let $\varphi \in \scrD(\calD)$, we have $\langle k_{x'}, L^*\varphi \rangle = 0$. Multiplying this with $L^*\varphi(x')$ and integrating with reference to $x'$ yields
\begin{align*}
    0 = \int_{\calD}L^*\varphi(x') \int_{\calD}L^*\varphi(x)k(x,x')dxdx' =  \int_{\calD} \int_{\calD}L^*\varphi(x)L^*\varphi(x')\mathbb{E}[U(x)U(x')]dxdx'.
\end{align*}
Permuting formally the expectation and the integrals (justified in equation \eqref{eq:justify fubini prop2}) yields
\begin{align*}
   0 &=  \int_{\calD} \int_{\calD}L^*\varphi(x)L^*\varphi(x')\mathbb{E}[U(x)U(x')]dxdx' \\
    &= \mathbb{E}\Bigg[\Bigg(\int_{\calD}L^*\varphi(x)U(x)dx\Big)^2\Bigg] = \mathbb{E}\big[\langle U,L^*\varphi\rangle ^2\big],
\end{align*}
and thus $\langle U,L^*\varphi\rangle = 0 $ a.s. : there exists $A_{\varphi} \in \calA$ with $\mathbb{P}(A_{\varphi}) = 1$ such that $\forall \omega \in A_{\varphi}, \langle U_{\omega}, L^*\varphi \rangle = 0$. We justify the expectation-integral permutation with the computation below
\begin{align} \label{eq:justify fubini prop2}
    \int_{\calD}\int_{\calD}|L^*\varphi(x)&L^*\varphi(x')|\mathbb{E}[|U(x)U(x')|]dxdx' \nonumber\\
    &\leq \int_{\calD}\int_{\calD}|L^*\varphi(x)L^*\varphi(x')|\sigma(x)\sigma(x')dxdx' \nonumber \\
     &\leq \Bigg(\int_{\calD}|L^*\varphi(x)|\sigma(x)dx \Bigg)^2 < + \infty.
\end{align}
As previously, setting $f = \sigma$ in equation \eqref{eq:control inf} shows that the integral above is indeed finite.

This does not finish the proof as we need to find a set $A$ with $\mathbb{P}(A) = 1$, independently from $\varphi$, such that $\forall \omega \in A, \langle U_{\omega}, L^*\varphi \rangle = 0$.
For this we shall use Lemma \ref{lemma:d_separable}. Let
\begin{align}\label{eq:A_prob_1}
A := B \cap\big( \bigcap_{\varphi \in F}A_{\varphi}\big),
\end{align}
where the set $F$ is introduced in Lemma \ref{lemma:d_separable}.
Then $\mathbb{P}(A)=1$ since $\mathbb{P}(B) = 1, \mathbb{P}(A_{\varphi}) = 1$ and $F$ is countable. Let $\omega \in A$. Since $U_{\omega} \in L^1_{loc}(\calD)$, equation \eqref{eq:control inf} shows that the map $T_{\omega} : \varphi \longmapsto \langle U_{\omega},L^*\varphi\rangle$ is a continuous linear form on $\scrD(\calD)$. In particular, Theorem 6.6$(c)$ p. 155 from \cite{rudin1991} states that $T_{\omega}$ is in fact \textit{sequentially} continuous. Let $\varphi \in \scrD(\calD)$ and $(\varphi_n)\subset F$ be such that $\varphi_n \rightarrow \varphi$ in $\scrD(\calD)$, from Lemma \ref{lemma:d_separable}. From the sequential continuity of $T_{\omega}$, $T_{\omega}(\varphi) = \text{lim}_{n\rightarrow \infty}  T_{\omega}(\varphi_n) = 0$ since $\forall n\in\mathbb{N}, T_{\omega}(\varphi_n) = 0$. That is, we have proved that
\begin{align*}
    \forall \omega \in A,\ \ \forall \varphi \in \scrD(\calD),\ \ \langle U_{\omega},\
    L^*\varphi \rangle = T_{\omega}(\varphi) = 0.
\end{align*}
Since $\mathbb{P}(A) = 1$, this shows that $(ii) \implies (i)$. 

When $U$ is not centered, consider the centered random field $V$ defined by $V(x) = U(x) - m(x)$ for which the above proof can be applied. Since $L$ is linear and $m$ is assumed to verify $L(m) = 0$ in the sense of distributions, the probabilistic sets $A_U = \{L(U) = 0 \text{ in the sense of distributions}\}$ and $A_V = \{L(V) = 0 \text{ in the sense of distributions}\}$ coincide and thus, $A \subset A_U$.
Finally, $U$ and $V$ have the same covariance function $k(x,x')$. Thus, 
\begin{align*}
\mathbb{P}(L(U) = 0 \text{ in the distrib. sense}) = 1 &\iff \mathbb{P}(L(V) = 0 \text{ in the distrib. sense}) = 1 \\
&\iff \forall x \in \calD, L(k_{x}) = 0 \text{ in the distrib. sense},
\end{align*}
which finishes the proof in the general case.
\end{proof}

\begin{remark}
Distributional solutions are the weakest types of solutions for PDEs. In general, additional regularity conditions have to be imposed to obtain physically realistic solutions, such as Sobolev regularity or entropy conditions as for nonlinear hyperbolic PDEs \cite{Serre1999SystemsOC}. However, every step in the above proof remains valid when replacing $\varphi \in \scrD(\calD)$ with $\varphi \in C_c^n(\calD)$. Although we have not explicited the usual topology of $C_c^n(\calD)$ in this article, we state that this is enough to show that the equalities stated in Proposition \ref{prop :diff_constraints_distrib} also hold in $C_c^n(\calD)'$, the space of finite order generalized functions of order $n$, rather than just in $\scrD'(\calD)$. $C_c^n(\calD)'$ is a smaller space than $\scrD'(\calD)$, though less used in functional analysis than $\scrD'(\calD)$.
\end{remark}
We partially recover Proposition \ref{prop : lin constraints classic} when the sample paths of $U$ are $n$ times differentiable with locally integrable $n^{th}$ derivative and $k \in C^{n,n}(\calD\times \calD)$. Indeed, in that case one can show that if $L = \sum_{|\alpha|\leq n} a_{\alpha}(x)\partial^{\alpha}$, then we simply have $\scrL = L$ in Proposition \ref{prop : lin constraints classic}. Additionally, $L(U_{\omega})$ and $L(k_x)$ both lie in $\calF(\calD,\mathbb{R}) \cap L^1_{loc}(\calD)$. In that framework, Proposition \ref{prop : lin constraints classic} states that
\begin{align}\label{eq:res prop 1}
\forall x \in \calD, \ L(k_x) = 0 \iff \mathbb{P}(L(U) = 0) = 1,
\end{align}
where the function equalities of the form $L(f) = 0$ in equation \eqref{eq:res prop 1} are valid everywhere on $\calD$. In contrast, for any function $g$ that lies in $L^1_{loc}(\calD)$, we have
\begin{align}\label{eq:g 0 ae}
g = 0 \text{\ in the sense of distributions} \iff
g = 0 \ a.e.
\end{align}
Equation \eqref{eq:g 0 ae} is just another way of saying that the linear map $f \longmapsto T_f$ given in \eqref{eq:fL1 loc distrib} is injective. Following equation \eqref{eq:g 0 ae}, Proposition \ref{prop :diff_constraints_distrib} states a slightly weaker result than \eqref{eq:res prop 1}, namely that
\begin{align}\label{eq:recover_ae}
\forall x \in \calD, \ \ L(k_x) = 0 \ a.e. \iff \mathbb{P}(L(U) = 0 \ a.e.) = 1.
\end{align}
If we actually have that the sample paths of $U$ lie in $C^n(\calD)$, nullity almost everywhere implies nullity everywhere and we recover equation \eqref{eq:res prop 1} from equation \eqref{eq:recover_ae}.

Instead of having the sample paths of $U$ lie in $C^n(\calD)$ though, one may rather encounter the case where $U$ is \textit{mean-square} differentiable up to a certain order $m$. Under some continuity assumptions over the covariance function of $U$ and up to suitable modifications, \cite{SCHEUERER2010} showed that the sample paths of the mean-square differentiated process are actually weak derivatives of the sample paths of $U$. As observed after Definition \ref{def:distrib_sol}, we thus expect that the sample paths of the mean-square differentiable random fields verifying Point $2,(ii)$ of Proposition \ref{prop :diff_constraints_distrib} are solutions of some weak formulation of the PDE, rather than just distributional solutions.
\begin{example}[A first order PDE]
Consider a continuous, nondifferentiable one dimensional covariance function $k_0 : \mathbb{R}\times\mathbb{R} \rightarrow \mathbb{R}$, for example $k_0(x,x') = \exp(-|x-x'|)$. It is then readily checked that the function ${k : \mathbb{R}^2\times\mathbb{R}^2\rightarrow \mathbb{R}}$ defined by $k((x,y),(x',y')) = k_0(x-y,x'-y')$ is positive definite and verifies Point $2,(ii)$ of Proposition \ref{prop :diff_constraints_distrib} for the PDE
\begin{align}\label{eq:simple_pde}
    \partial_xu + \partial_yu = 0 \ \ \ \text{ in } \mathbb{R}^2.
\end{align}
Consider now a centered second order random field $(U(x,y))_{(x,y)\in\mathbb{R}^2}$ with covariance function $k$, passing to a measurable version of $U$ if necessary (it exists from Section \ref{subsub:sto_pro_mes}, as the continuity of $k$ yields the continuity in probability of $U$). Then almost surely, its sample paths verify the PDE \eqref{eq:simple_pde} in the sense of distributions, even though they are not expected to be differentiable. An example of random field whose covariance function is $k$ as defined above, is the GP $(U_0(x-y))_{(x,y)\in\mathbb{R}^2}$ where $(U_0(x))_{x\in\mathbb{R}} \sim GP(0,k_0)$. These formulas can be obtained by viewing the PDE \eqref{eq:simple_pde} as a transport equation under the condition that $U(x,0) = U_0(x)$, following the same approach as in the upcoming Section \ref{sub:gp_model_wave}.
\end{example}
\subsection{A heredity property for Gaussian process regression} 
\subsubsection{Gaussian process regression in a nutshell}\label{subsub:GPR}
GPs can be used for function interpolation. Let $u$ be a function defined on $\calD$ for which we know a dataset of values $B = \{u(x_1),...,u(x_n)\}$. Conditioning the law of a GP $(U(x))_{x \in \calD} \sim GP(m,k)$ on the database $B$ yields a second GP $\Tilde{U}$ given by
$ \Tilde{U}(x) := (U(x) | U({x_i}) = u(x_i), i = 1,...,n)$.
The law of $\Tilde{U}$ is known: $(\Tilde{U}(x))_{x \in \calD} \sim GP(\Tilde{m},\tilde{k})$. $\Tilde{m}$ and $\tilde{k}$ are given by the so-called \textit{Kriging} equations \eqref{eq:krig mean} and \eqref{eq:krig cov}. Let $X = (x_1,...,x_n)^T$, denote $m(X)$ the column vector such that $m(X)_i = m(x_i)$, $k(X,X)$ the square matrix such that $k(X,X)_{ij} = k(x_i,x_j)$ and given $x \in \calD$, $k(X,x)$ the column vector such that $k(X,x)_i = k(x_i,x)$. Suppose that $K(X,X)$ is invertible, then \cite{gpml2006}
\begin{numcases}{}
    \Tilde{m}(x) &= \hspace{3mm}$m(x) + k(X,x)^Tk(X,X)^{-1}(u(X) - m(X))$, \label{eq:krig mean} \\
    \tilde{k}(x,x') &= \hspace{3mm}$k(x,x') - k(X,x)^Tk(X,X)^{-1}k(X,x')$. \label{eq:krig cov}
\end{numcases}
The Kriging standard deviation function is then given by 
\begin{align}\label{eq:def_std_krig}
\tilde{\sigma}(x) = \tilde{k}(x,x)^{1/2}.
\end{align}
The so-called Kriging mean $\tilde{m}$ plays the role of an approximation of $u$; in particular, it interpolates $u$ at the observation points: $\tilde{m}(x_i)=u(x_i)$ for all $i = 1, ... , n$. Moreover, the Kriging covariance $\tilde{k}$ can be used to further control the distance between $u$ and $\tilde{m}$.
\subsubsection{Conditioned Gaussian processes under linear differential constraints}
We can now state the following corollary, which draws the consequences of Proposition \ref{prop :diff_constraints_distrib} when applied to GPR.

\begin{proposition}[Heredity of Proposition \ref{prop :diff_constraints_distrib} to conditioned GPs]\label{prop : inheritance pde conditioned}
Let $\calD$ and $L$ be as defined in Proposition \ref{prop :diff_constraints_distrib}. Let $(U(x))_{x\in \calD} \sim GP(m,k)$ be a Gaussian process that verifies the assumptions of Proposition \ref{prop :diff_constraints_distrib}.
Suppose also that
\begin{align}\label{eq:m k_x sol}
L(m) = 0 \text{ \ and \ } \forall x \in \calD,\ L(k_x) = 0 \text{ \ both in the sense of distributions}.
\end{align}
$(i)$ Then whatever the integer $p$, the vector $u = (u_1,...,u_p)^T \in \mathbb{R}^p$ and the vector $X = (x_1,...,x_p)^T \in \calD^p$ such that $k(X,X)$ is invertible, the Kriging mean $\tilde{m}(x)$ and the Kriging standard deviation function $\tilde{\sigma}$ both lie in $L^1_{loc}(\calD)$, and we have
\begin{align*}
L(\tilde{m}) = 0 \text{ \ and \ } \forall x \in \calD, \ L(\tilde{k}_x) = 0 \text{ \ both in the sense of distributions}.
\end{align*}
where $\tilde{m}$ and $\tilde{k}$ are defined in equations \eqref{eq:krig mean} and \eqref{eq:krig cov}. \\
$(ii)$ As such, the sample paths of the conditioned Gaussian process $\big(\tilde{U}(x)\big)_{x\in \calD}$ defined by $\tilde{U}(x) = (U(x)|U(x_i) = u_i \ \forall i = 1,...,p)$ are almost surely solutions of the equation $L(f) = 0$ in the sense of distributions:
\begin{align*}
\mathbb{P}(L(\tilde{U}) = 0 \text{ in the sense of distributions}) = 1.
\end{align*}
\end{proposition}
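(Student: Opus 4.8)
The plan is to verify, one hypothesis at a time, that the conditioned process $\tilde U$ meets the assumptions of Proposition \ref{prop :diff_constraints_distrib}, and then to invoke that proposition. The structural observation making everything work is that, up to the fixed functions $m$ and $k_{x_1},\dots,k_{x_p}$, both the Kriging mean $\tilde m$ and each function $\tilde k_{x'}:y\mapsto\tilde k(x',y)$ are \emph{finite linear combinations} of the $k_{x_i}$; so $L^1_{loc}(\calD)$-membership and the distributional constraint $L(\cdot)=0$ descend to them from the hypotheses \eqref{eq:m k_x sol} by linearity of $L$ on $\scrD'(\calD)$ (which is manifest from the defining formula \eqref{eq:def_TU_distrib}).

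\emph{Part $(i)$.} Set $c:=k(X,X)^{-1}(u(X)-m(X))$, so that \eqref{eq:krig mean} reads $\tilde m=m+\sum_{i=1}^p c_i k_{x_i}$. By Point 1 of Proposition \ref{prop :diff_constraints_distrib}, $m$ and each $k_{x_i}$ lie in $L^1_{loc}(\calD)$, hence so does $\tilde m$; and since $L(m)=L(k_{x_i})=0$ in $\scrD'(\calD)$ by \eqref{eq:m k_x sol} and $L$ is linear through \eqref{eq:def_TU_distrib}, we get $L(\tilde m)=0$ in $\scrD'(\calD)$. Fixing $x'\in\calD$ and setting $e:=k(X,X)^{-1}k(X,x')$, equation \eqref{eq:krig cov} likewise gives $\tilde k_{x'}=k_{x'}-\sum_{i=1}^p e_i k_{x_i}\in L^1_{loc}(\calD)$ with $L(\tilde k_{x'})=0$ in $\scrD'(\calD)$. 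Finally, $k(X,X)^{-1}$ being positive definite, the term subtracted in \eqref{eq:krig cov} is nonnegative on the diagonal, so $0\le\tilde\sigma(x)^2=\tilde k(x,x)\le k(x,x)=\sigma(x)^2$; as $\tilde\sigma$ is measurable (it is built from $\sigma^2$ and the $k_{x_i}$, all measurable) and dominated by $\sigma\in L^1_{loc}(\calD)$, it lies in $L^1_{loc}(\calD)$. This proves $(i)$.

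\emph{Part $(ii)$.} Proposition \ref{prop :diff_constraints_distrib} requires a \emph{measurable} random field, and because $k$ — hence $\tilde k$ — is not assumed continuous, one cannot produce a measurable modification of $\tilde U$ from a continuity-in-probability argument. Instead I would realize the conditioned GP explicitly as
\[
\tilde U(x):=U(x)+k(X,x)^T k(X,X)^{-1}\big(u(X)-U(X)\big),\qquad U(X):=(U(x_1),\dots,U(x_p))^T.
\]
A direct moment computation (the ``variance reduction'' identity) gives $\mathbb{E}[\tilde U(x)]=\tilde m(x)$ and $\text{Cov}(\tilde U(x),\tilde U(x'))=\tilde k(x,x')$, and each $\sum_j a_j\tilde U(y_j)$ is an affine function of the jointly Gaussian family $(U(y))_{y\in\calD}$, so $\tilde U\sim GP(\tilde m,\tilde k)$ and $\tilde U$ has the law of the conditioned process. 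This realization is moreover a measurable random field: the coefficients $x\mapsto[k(X,X)^{-1}k(X,x)]_i$ are measurable (each $k_{x_j}\in L^1_{loc}(\calD)$ is measurable) and $(\omega,x)\mapsto U(x)(\omega)$ is $\calA\otimes\mathcal{B}(\calD)$-measurable by assumption, so $\tilde U$ is a finite sum of products of separately measurable functions, hence $\calA\otimes\mathcal{B}(\calD)$-measurable; it is second order, being an affine combination of $L^2(\mathbb{P})$ variables. Thus $\tilde U$ satisfies every hypothesis of Proposition \ref{prop :diff_constraints_distrib}, with $L(\tilde m)=0$ in $\scrD'(\calD)$ from Part $(i)$; and Part $(i)$ also gives statement $(ii)$ of that proposition for $\tilde U$, namely $L(\tilde k_x)=0$ in $\scrD'(\calD)$ for every $x\in\calD$, so its statement $(i)$ follows: $\mathbb{P}(L(\tilde U)=0\text{ in the sense of distributions})=1$.

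\emph{Main obstacle.} The whole argument reduces to linearity of $L$ on $\scrD'(\calD)$ plus the two conclusions ($L^1_{loc}$-membership and constraint preservation) already packaged in Proposition \ref{prop :diff_constraints_distrib}. The single point where care is genuinely needed is the measurability of the conditioned process: this forces the explicit pathwise construction of $\tilde U$ from $U$ above, rather than an abstract appeal to conditional laws, and it also requires checking that the Kriging coefficients are honest measurable functions of $x$ (which follows from $k_{x_j}\in L^1_{loc}(\calD)$).
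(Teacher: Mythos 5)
Your proof is correct and follows essentially the same route as the paper: part $(i)$ is linearity of $L$ applied to the Kriging formulas \eqref{eq:krig mean}--\eqref{eq:krig cov} together with the bound $\tilde{k}(x,x)\leq k(x,x)$ (so $\tilde{\sigma}\leq\sigma\in L^1_{loc}(\calD)$), and part $(ii)$ is Proposition \ref{prop :diff_constraints_distrib} applied to the conditioned process with moments $\tilde{m},\tilde{k}$. Your one genuine addition is the explicit pathwise realization $\tilde{U}(x)=U(x)+k(X,x)^{T}k(X,X)^{-1}\bigl(u-U(X)\bigr)$, which supplies the measurability hypothesis of Proposition \ref{prop :diff_constraints_distrib} for $\tilde{U}$ — a point the paper's proof leaves implicit by working directly with the law $GP(\tilde{m},\tilde{k})$.
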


\begin{proof}
Note first that for all $x \in \calD, \tilde{k}(x,x) \leq k(x,x)$ (\cite{fasshauer2007}, p. 117). Thus the function $\tilde{\sigma} : x \longmapsto \tilde{k}(x,x)^{1/2}$ also lies in $L^1_{loc}(\calD)$.
Point $(i)$ is then a direct consequence of the definition of $\tilde{m}$ and $\tilde{k}$ in equations \eqref{eq:krig mean} and \eqref{eq:krig cov}, and the linearity of $L$. 
Proposition \ref{prop :diff_constraints_distrib} can then be applied conjointly with $(i)$, which yields point $(ii)$ since the mean and covariance functions of the GP $\tilde{U}$ are $\tilde{m}$ and $\tilde{k}$ (see equations \eqref{eq:krig mean} and \eqref{eq:krig cov}).
\end{proof}
Proposition \ref{prop : inheritance pde conditioned} shows that when $U$ is a GP, the results of Proposition \ref{prop :diff_constraints_distrib} are inherited on the conditioned posterior process $\tilde{U}$. One weak consequence of Proposition \ref{prop : inheritance pde conditioned} is that if GPR is performed with a covariance function $k$ that verifies point $(ii)$ of Proposition \ref{prop :diff_constraints_distrib}, then all the possible Kriging means provided by GPR remain solutions of the PDE $L(\tilde{m}) = 0$.

\section{Gaussian processes and the 3 dimensional wave equation}\label{Section:GP_wave}
The formalism we used in the previous section is necessary to tackle hyperbolic PDEs as in some cases, their solutions only verify the PDE in a weaker sense, e.g. the distributional sense (\cite{evans1998}, Sections 2.1.1 and 7.2). Hyperbolic PDEs are typically encountered when describing finite speed propagation phenomena and their prototype is the wave equation (see equation \eqref{eq:wave_eq}); this equation is central in a number of fields such as acoustics, electromagnetics and quantum mechanics.
In this section, we derive a GP model for the solutions of the homogeneous 3D wave equation, with explicit covariance formulas in the form of convolutions. 

We show on one example that the model we obtain below is capable of dealing with an initial speed $v_0$ that is piecewise continuous and an initial position $u_0$ that has piecewise continuous derivatives, when the initial discontinuity surfaces are ``nice enough''. This is an advantage with reference to the previous models, where the sample paths actually had to be sufficiently differentiable to obtain sample path degeneracy with reference to the PDE.
\subsection{General solution to the 3 dimensional wave equation}\label{sub:sol_3D_wave}
Denote the 3D Laplace operator $\Delta = \partial_{xx}^2 + \partial_{yy}^2 + \partial_{zz}^2$ and the d'Alembert operator $\Box = 1/c^2 \partial_{tt}^2 - \Delta$ with constant wave speed $c > 0$.
We focus on the general initial value problem in the free space $\mathbb{R}^3$
\begin{align}\label{eq:wave_eq}
\begin{cases}
    \hfil \Box w &= 0 \hspace{35pt} \forall (x,t) \in \mathbb{R}^3 \times \mathbb{R}_+^*,  \\
    \hfil w(x,0) &= u_0(x) \hspace{16pt} \forall x \in \mathbb{R}^3,  \\
    (\partial_t w)(x,0) &= v_0(x) \hspace{17pt} \forall x \in \mathbb{R}^3.
\end{cases}
\end{align}
Throughout this article, we will refer to $u_0$ as the initial position and $v_0$ as the initial speed. The solution of this problem is unique in the distributional sense (\cite{Duistermaat2010}, p. 164). It can be extended to all $t\in \mathbb{R}$ (\cite{Duistermaat2010}, p. 295) and is represented as follow (\cite{Duistermaat2010}, p. 295 again)
\begin{align}\label{eq sol wave}
    w(x,t) = (F_t * v_0)(x) + (\Dot{F}_t * u_0)(x) \hspace{10mm} \forall (x,t) \in \mathbb{R}^3 \times \mathbb{R},
\end{align}
where $F_t$ and $\Dot{F}_t$ are known generalized functions. That is, the function $w(x,t)$ above is a solution of the system \eqref{eq:wave_eq}, in which $t$ is now allowed to lie in $\mathbb{R}$ rather than $\mathbb{R}_+^*$. The existence of such an extension is possible because of the time reversibility of the wave equation (in the language of semigroup theory, its semigroup can be embedded in a group, \cite{pazy_sg}, Theorem 4.5 p. 222). In dimension 3, $F_t$ and $\Dot{F}_t$ are compactly supported generalized functions of order $0$ and $1$ respectively. They are given by
\begin{align}\label{eq:ft ftp in 3D}
    F_t = \frac{\sigma_{c|t|}}{4\pi c^2 t} \ \ \ \text{ and } \ \ \ \Dot{F}_t = \partial_t F_t \ \ \ \forall t \in \mathbb{R},
\end{align}
where $\sigma_R$ is the surface measure of the sphere of center $0$ and radius $R$; $\Dot{F}_t = \partial_t F_t$ means that for all $f \in C_c^1(\mathbb{R}^3), \langle\Dot{F}_t,f\rangle = \partial_t \langle F_t,f\rangle$. We make these expressions more explicit in equation \eqref{eq:kirschoff}, using spherical coordinates. It is worth noting that $(F_t)_{t\in\mathbb{R}}$ corresponds to the \textit{Green's function} of the wave equation, in the sense that it verifies the system \eqref{eq:wave_eq} with $u_0 = 0$ and $v_0 = \delta_0$ where $\delta_0$ is the Dirac mass (\cite{Duistermaat2010}, pp. 294-295). As discussed in Remark \ref{rk:meas_val_sol}, $(F_t)_{t\in\mathbb{R}}$ is a family of singular measures and this PDE system has to be understood in the distributional sense. Note also that equations \eqref{eq:ft ftp in 3D} show that $F_t$ and $\dot{F}_t$ are supported on the sphere of radius $c|t|$: ``the support of $F_t$ propagates at finite speed $c$''. This property is known as the Huygens principle for the  three dimensional wave equation, see \cite{evans1998}, p. 80.

Suppose that $u_0 \in C^1(\mathbb{R}^3) $ and $v_0 \in C^0(\mathbb{R}^3)$, then $w$ as defined in equation \eqref{eq sol wave} is a pointwise defined function (Section \ref{subsub:conv}) and in that case an explicit formula for such convolutions is reminded in equation \eqref{eq:conv distrib} (yet one may actually make sense out of \eqref{eq sol wave} when $u_0$ and $v_0$ are only required to be any generalized functions, see \cite{treves2006topological}, Chapter 27). 

Equation \eqref{eq sol wave} can be written using means over spheres. Denote $(r,\theta,\phi), r\geq 0, \theta \in [0,\pi],  \phi \in [0,2\pi]$ the spherical coordinates, $S(0,1)$  the unit sphere of $\mathbb{R}^3$ and $\gamma = (\sin \theta \cos \phi,\sin \theta \sin \phi,  \cos \theta)^T$ the corresponding parametrization of $S(0,1)$ ($||\gamma||_2=1$). We write $d\Omega = \sin \theta d\theta d\phi$ the surface differential element of $S(0,1)$.
The formulas \eqref{eq sol wave} and \eqref{eq:ft ftp in 3D} then lead to the Kirschoff formula (\cite{evans1998}, p. 72):
\begin{align}\label{eq:kirschoff}
w(x,t) = \int_{S(0,1)}tv_0(x-c|t|\gamma) + u_0(x-c|t|\gamma) -  c|t|\gamma \cdot \nabla u_0(x-c|t|\gamma) \frac{d\Omega}{4\pi}
\end{align}

\subsection{Gaussian process modelling of the solution}\label{sub:gp_model_wave}
Suppose now that $u_0$ and $v_0$ are unknown, and only pointwise values of $w$ are observed. We thus model $u_0$ and $v_0$ as random functions and put Gaussian process priors over $u_0$ and $v_0$. More precisely, we make the following assumptions.
\begin{enumerate} [label=(\subscript{A}{{\arabic*}})]
    \item \label{assump1} Suppose that the initial conditions $u_0$ and $v_0$ of Problem \eqref{eq:wave_eq} are sample paths drawn from two independent Gaussian processes $U^0 \sim GP(0,k_{\mathrm{u}})$ and $V^0 \sim GP(0,k_{\mathrm{v}})$: $\exists \omega \in \Omega, \forall x \in \mathbb{R}^3, u_0(x) = U^0_{\omega}(x)$ and $v_0(x) = V^0_{\omega}(x)$.
    \item \label{assump2} Suppose that all sample paths of $U^0$ lie in $C^1(\mathbb{R}^3)$ and that those of $V^0$ lie in $C^0(\mathbb{R}^3)$, almost surely. A sufficient condition for this is given in \cite{adler2007}, Theorem 1.4.2. This theorem states that under mild technical assumptions, the paths of $(U(x))_{x \in \calD} \sim GP(0,k)$ lie in $C^l$ a.s. as soon as $ k \in C^{2l}(\calD \times \calD)$, \textit{which we assume from now on}. This is e.g. fulfilled by the Matérn covariance functions from equation \eqref{eq:matern}, with $l=0$ for $k_{1/2}$ and $l=1$ for $k_{3/2}$.
\end{enumerate}
We now analyse the consequence of these two assumptions. First, they imply that by solving \eqref{eq:wave_eq}, one obtains a time-space stochastic process $W(x,t)$ defined by 
\begin{align}\label{eq:W sol}
    W(x,t) : \Omega \ni \omega \longmapsto (F_t * V^0_{\omega})(x) +  (\Dot{F}_t * U^0_{\omega})(x).    
\end{align}
Here again, $V^0_{\omega}$ denotes the sample path of $V^0$ at $\omega \in \Omega$ and likewise for $U_{\omega}^0$. In particular, thanks to assumption $(A_2)$, equation \eqref{eq:W sol} defines a random variable for all $(x,t)$. 
Note the space-time variable $z = (x,t)$ and note the random variables
\begin{align}\label{eq:U_and_V}
V(z) : \omega \longmapsto (F_t * V^0_{\omega})(x) \ \text{ and } \ U(z) :  \omega \longmapsto (\Dot{F}_t * U^0_{\omega})(x),  
\end{align}
that is, $W(z) = U(z) + V(z)$. We show in the next proposition that the random fields $U,V$ and $W$ are GPs as well. In particular we describe their covariance functions.
\begin{proposition}\label{prop : wave kernel}Define the two functions
\begin{align}
{k_{\mathrm{v}}^{\mathrm{wave}}(z,z') =[(F_t \otimes F_{t'}) * k_{\mathrm{v}}](x,x')},\label{eq: kv wave} \\
{k_{\mathrm{u}}^{\mathrm{wave}}(z,z') = [(\Dot{F}_t \otimes \Dot{F}_{t'}) * k_{\mathrm{u}}](x,x')}. \label{eq: ku wave}
\end{align}
(i) Then $U = (U(z))_{z \in \mathbb{R}^3 \times \mathbb{R}}$ and $V =(V(z))_{z \in \mathbb{R}^3 \times \mathbb{R}}$ as defined in \eqref{eq:U_and_V} are two independent centered GPs with covariance functions $k_{\mathrm{u}}^{\mathrm{wave}}$ and $k_{\mathrm{v}}^{\mathrm{wave}}$ respectively. Consequently, $(W(z))_{z \in \mathbb{R}^3 \times \mathbb{R}}$ is a centered GP whose covariance function is given by
\begin{align}\label{eq:wave kernel}
k_{W}(z,z') = k_{\mathrm{v}}^{\mathrm{wave}}(z,z') + k_{\mathrm{u}}^{\mathrm{wave}}(z,z').
\end{align}
(ii) Conversely, any measurable centered second order random field with covariance function $k_{W}$ has its sample paths solution of the wave equation \eqref{eq:wave_eq}, in the sense of distributions, almost surely.
\end{proposition}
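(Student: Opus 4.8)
The plan is to prove part (i) directly and then deduce part (ii) from Proposition~\ref{prop :diff_constraints_distrib} applied twice. \emph{Part (i).} Fix $z=(x,t)$. Writing $F_t$ and $\dot F_t=\partial_t F_t$ through the sphere measure $\sigma_{c|t|}$ --- equivalently, using the Kirchhoff representation \eqref{eq:kirschoff}, which is legitimate under assumption \ref{assump2} --- the maps $U(z)\colon\omega\mapsto(\dot F_t*U^0_\omega)(x)$ and $V(z)\colon\omega\mapsto(F_t*V^0_\omega)(x)$ are integrals over the compact sphere $S(0,1)$ of the jointly measurable functions $(\omega,\gamma)\mapsto U^0_\omega(x-c|t|\gamma)$, $\nabla U^0_\omega(x-c|t|\gamma)$ and $V^0_\omega(x-c|t|\gamma)$ (joint measurability of $\nabla U^0$ follows because its entries are everywhere-defined pointwise limits of difference quotients of the measurable field $U^0$). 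Fubini then gives measurability in $\omega$, and Jensen's inequality together with the continuity of $k_{\mathrm u}\in C^2$, $k_{\mathrm v}\in C^0$ on compacts gives square-integrability and centering; the same representation shows that $W$ is jointly measurable in $(\omega,z)$. For Gaussianity, I would note that every Riemann sum approximating the sphere integral defining $U(z)$ is a finite linear combination of values $\partial^pU^0(y)$, $|p|\le1$, each of which lies in the Gaussian space $\mathcal{L}(U^0)$ --- using that under $k_{\mathrm u}\in C^2$ the path derivatives of $U^0$ agree with its mean-square derivatives, which are $L^2(\mathbb P)$-limits of difference quotients and hence belong to $\mathcal{L}(U^0)$. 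Since the integrand is $L^2(\mathbb P)$-continuous in the integration variable, these Riemann sums converge in $L^2(\mathbb P)$, so $U(z)\in\mathcal{L}(U^0)$ and likewise $V(z)\in\mathcal{L}(V^0)$. As $U^0\perp V^0$, the Gaussian subspaces $\mathcal{L}(U^0)$ and $\mathcal{L}(V^0)$ are independent, so any finite linear combination $\sum_i a_iU(z_i)+\sum_j b_jV(z_j)$ is Gaussian; hence $U$ and $V$ are independent centered GPs and $W=U+V$ is a centered GP. Finally the covariance identities \eqref{eq: kv wave}--\eqref{eq: ku wave} follow from Fubini: writing $\mathbb E[V(z)V(z')]=\mathbb E[(F_t*V^0)(x)(F_{t'}*V^0)(x')]$ with both convolutions against $\sigma_{c|t|}$, $\sigma_{c|t'|}$ and exchanging the (finite) expectation with the sphere integrals gives $\langle F_t\otimes F_{t'},(y,y')\mapsto k_{\mathrm v}(x-y,x'-y')\rangle=[(F_t\otimes F_{t'})*k_{\mathrm v}](x,x')$; the computation for $U(z)$ is identical once one also differentiates under the sphere integral in $t,t'$ (allowed since the integrands are $C^1$ and compactly supported). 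Adding them and using $U\perp V$ yields \eqref{eq:wave kernel}.

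\emph{Part (ii).} I would apply Proposition~\ref{prop :diff_constraints_distrib} with $L=\Box=\tfrac1{c^2}\partial_{tt}^2-\Delta$ on the open set $\calD=\mathbb R^3\times\mathbb R$: $\Box$ has constant coefficients (so $a_\alpha\in C^{|\alpha|}$) and is formally self-adjoint, $\Box^*=\Box$. The GP $W$ from part (i) is measurable, centered, second order, has mean $m\equiv0\in L^1_{loc}(\calD)$ (so $\Box m=0$), and $z\mapsto k_W(z,z)$ is continuous by dominated convergence in the sphere integrals, hence $\sigma_W=k_W(\cdot,\cdot)^{1/2}\in L^1_{loc}(\calD)$. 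Moreover, for each $\omega$ the path $z'\mapsto W(z')(\omega)$ is the solution \eqref{eq sol wave} of the wave equation with data $(U^0_\omega,V^0_\omega)\in C^1(\mathbb R^3)\times C^0(\mathbb R^3)$, hence solves $\Box W_\omega=0$ in $\scrD'(\mathbb R^3\times\mathbb R)$ (\cite{Duistermaat2010}, p.~295), so $\mathbb P(\Box W=0$ distributionally$)=1$. Proposition~\ref{prop :diff_constraints_distrib}, direction $(i)\Rightarrow(ii)$, then gives $\Box((k_W)_z)=0$ in the distributional sense for every $z\in\calD$. Now let $\tilde W$ be an arbitrary measurable centered second order random field with covariance $k_W$: it has the same standard deviation function $\sigma_W\in L^1_{loc}(\calD)$ and null mean, so Proposition~\ref{prop :diff_constraints_distrib}, direction $(ii)\Rightarrow(i)$, yields $\mathbb P(\Box\tilde W=0\text{ in the sense of distributions})=1$, which is the claim. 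One could also verify $\Box((k_W)_z)=0$ directly by recognizing, for fixed $z$, the map $z'\mapsto(k_W)_z(z')$ as a wave solution with explicit continuous/$C^1$ initial data built from $k_{\mathrm u},k_{\mathrm v}$, but routing through part (i) is shorter.

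The step I expect to be the main obstacle is the bookkeeping in part (i): identifying the distributional convolutions $F_t*f$ and $\dot F_t*f$ with the explicit Kirchhoff sphere integrals, and justifying every interchange --- expectation against sphere integrals, and the $\partial_t,\partial_{t'}$ differentiations encoded in $\dot F_t=\partial_tF_t$ --- so that the covariance of $W$ comes out exactly as \eqref{eq:wave kernel}. The Gaussianity step, which relies on path derivatives of $U^0$ lying in the $L^2(\mathbb P)$-closed span $\mathcal{L}(U^0)$, is the other delicate point. Once part (i) is in place, part (ii) is essentially a formal consequence of Proposition~\ref{prop :diff_constraints_distrib} together with the classical fact that \eqref{eq sol wave} solves the wave equation in $\scrD'$.
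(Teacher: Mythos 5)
Your proposal is correct and, for part (i), follows the paper's strategy: Gaussianity of $U(z)$ and $V(z)$ is obtained via Riemann-sum approximations of the Kirchhoff integrals converging inside the Gaussian spaces $\mathcal{L}(U^0)$ and $\mathcal{L}(V^0)$, independence comes from the orthogonality/independence of these spaces, and the covariance identities come from an expectation--integral interchange. The implementation differs in two places. For the covariance computation, you work directly with the explicit sphere integrals \eqref{eq:kirschoff} and differentiate under the integral sign, whereas the paper represents $\dot F_t$ as a finite sum of derivatives of compactly supported Radon measures (the finite-order structure theorem of Section \ref{subsub:finite_order}) and applies Fubini for measures; both routes hinge on the same identity $\mathbb{E}[\partial^{\alpha}U^0(x)\partial^{\alpha'}U^0(x')]=\partial_1^{\alpha}\partial_2^{\alpha'}k_{\mathrm{u}}(x,x')$, which you justify through the agreement of path and mean-square derivatives (this also makes explicit why the gradient values lie in $\mathcal{L}(U^0)$, a point the paper's ``same reasoning for $U$'' leaves implicit), while the paper cites mean-square differentiability and checks the Fubini hypothesis by a Cauchy--Schwarz bound against $|\mu^t_\alpha|$. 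For part (ii), you route through the sample paths of $W$: $\mathbb{P}(\Box W=0)=1$, then direction $(i)\Rightarrow(ii)$ of Proposition \ref{prop :diff_constraints_distrib} to get $\Box (k_W)_{z'}=0$, then $(ii)\Rightarrow(i)$ for the arbitrary measurable field; this additionally requires the joint measurability of $W$, which you assert only briefly (it does hold, e.g.\ by a Carathéodory-type argument: paths continuous in $z$, measurable in $\omega$). The paper's shorter route---which you mention as an alternative---observes directly that $z\mapsto k_W(z,z')$ is itself of the form \eqref{eq sol wave}, hence a distributional solution, and invokes only the direction $(ii)\Rightarrow(i)$, so no measurability of $W$ is ever needed; your detour buys nothing extra here but is sound once the measurability point is spelled out.
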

The formulas \eqref{eq: kv wave} and \eqref{eq: ku wave} can easily be derived formally, by running computations as if $F_t$ and $\Dot{F}_t$ were regular generalized functions (Section \ref{subsub:regular_distrib}). This is somewhat justified because any generalized function can be approximated with a sequence of smooth compactly supported functions, by a ``cutting and regularizing'' argument (\cite{treves2006topological}, Theorem 28.2, Chapter 28). However, checking that this procedure passes to the limit everywhere is tedious. Here, we rather make use of representations of $F_t$ and $\Dot{F}_t$ thanks to Radon measures (Sections \ref{subsub:radon} and \ref{subsub:finite_order}) and use Fubini's theorem. We refer to Sections \ref{subsub:finite_order} and \ref{subsub:tensor_gen_func} for the definition of $\Dot{F}_t \otimes \Dot{F}_{t'}$, and Section \ref{subsub:conv} for the definition of $(\Dot{F}_t \otimes \Dot{F}_{t'}) * k_{\mathrm{u}}$.
\begin{proof}
$(i)$ : first we prove that $U$ and $V$ are GPs. Since $U^0$ and $V^0$ are GPs, $\mathcal{L}(U^0)$ and $\mathcal{L}(V^0)$ are only comprised of Gaussian random variables (see Section \ref{subsub:second_GP}). We then rely on the Kirschoff formula \eqref{eq:kirschoff}, writing the integrals as limits of Riemann sums.
We start with $V$, that is, we focus on the first term in Kirschoff's formula \eqref{eq:kirschoff}. To show that $V$ is a Gaussian process, we only need to show that for any $z$, $V(z) \in \mathcal{L}(V^0)$ as this will ensure the Gaussian process property. Since the sample paths of $V^0$ are continuous almost surely, there exists a sequence of numbers $(a_{k}^n)\subset\mathbb{R}$ and points $(y_k^n)\subset S(0,1)$ such that for almost any $\omega \in \Omega$,
\begin{align*}
V(z)(\omega) &= (F_t * V_{\omega}^0)(x) = t\int_{S(0,1)} V^0(x-c|t|\gamma)(\omega)\frac{d\Omega}{4\pi} \\
= &\frac{t}{4\pi}\int_{0}^{2\pi}\int_{0}^{\pi} V^0(x-c|t|\gamma(\theta,\phi))(\omega) \sin(\theta)d\theta d\phi =  \lim_{n \rightarrow \infty} \sum_{k=1}^n a_{k}^n V^0(x-c|t|y_k^n)(\omega).
\end{align*}
This shows that $V(z)$ is the a.s. limit of the sequence of centered Gaussian random variables $(Y_n)\subset \mathcal{L}(V^0)$, where $Y_n = \sum_{k=1}^n a_{k}^n V^0(x-c|t|y_k^n)$; $Y_n$ is Gaussian because $V^0$ is a GP. Almost sure convergence implies convergence in law. From \cite{legall2013}, Proposition 1.1, $V(z)$ is normally distributed and the convergence also takes place in $L^2(\mathbb{P})$. Therefore, $V(z) \in \mathcal{L}(V^0)$ and $V$ is a Gaussian process. From the same proposition, $V(z)$ is centered because the variables $Y_n$ are centered. Note that since $F_t$ is supported on the compact set $S(0,c|t|)$, we only required the sample paths of $V^0$ to be continuous rather than continuous and compactly supported.

We apply the same reasoning to $U$, by applying the above steps to the second part of Kirschoff's formula \eqref{eq:kirschoff}. One's ability to write out the integrals as a limit of Riemann sums is ensured when the sample paths of $U^0$ lie in $C^1(\mathbb{R}^3)$.

Finally, since $U^0$ and $V^0$ are independent, $\mathcal{L}(U^0)$ and $\mathcal{L}(V^0)$ are orthogonal in $L^2(\mathbb{P})$. Since $\mathcal{L}(U) \subset \mathcal{L}(U^0)$ and likewise for $V$, $U$ and $V$ are independent Gaussian processes as for Gaussian random variables, independence is equivalent to null covariance. Finally, the sum of independent Gaussian random variables is a Gaussian random variable. Therefore $\mathcal{L}(W) \subset \mathcal{L}(U) + \mathcal{L}(V)$ is only comprised of Gaussian random variables and $W$ is a Gaussian process. Now, we prove that
\begin{align}\label{eq:ftp ftp fubini}
\mathbb{E}[U(z)U(z')] = [(\dot{F}_t \otimes \dot{F}_{t'})*k_{\mathrm{u}}](x,x').
\end{align}
The main argument is Fubini's theorem for Radon measures. For this we use the fact that $\dot{F}_t$ is a distribution of order $1$ and can be identified to a sum of derivatives of measures (see equation \eqref{eq:finite order radon}): for all $t \in \mathbb{R}$, there exists $\{\mu^{t}_{\alpha}\}_{\alpha \in \mathbb{N}^3,|\alpha|\leq 1}$ a family of Radon measures such that
\begin{align}
    \Dot{F}_t &= \sum_{|\alpha| \leq 1} \partial^{\alpha} \mu^{t}_{\alpha} \ \ \ \text{in the sense of distributions}.
\end{align}
Moreover, $\dot{F}_t$ is compactly supported, therefore all the measures $\mu^{t}_{\alpha}$ are also compactly supported. 
First, we write $U_{\omega}(z)$ in integral form:
\begin{align}
U_{\omega}(z) &= \big(\Dot{F}_t * U_{\omega}^0\big)(x) = \langle \Dot{F}_t, \tau_{-x}\check{U}_{\omega}^0 \rangle = \Big \langle \sum_{|\alpha| \leq 1} \partial^{\alpha} \mu^{t}_{\alpha}, \tau_{-x}\check{U}_{\omega}^0 \Big \rangle \\
    &= \sum_{|\alpha|\leq 1} \langle \mu^{t}_{\alpha}, (-1)^{|\alpha|}\partial^{\alpha} \tau_{-x}\check{U}_{\omega}^0 \rangle = \sum_{|\alpha|\leq 1} \int_{\mathbb{R}^3} (-1)^{|\alpha|}\partial^{\alpha} U_{\omega}^0(x-y) \mu^{t}_{\alpha}(dy). 
\end{align}
Before applying Fubini's theorem, we need to check an integrability condition. Let $\alpha \in \mathbb{N}^3$ be such that $|\alpha| \leq 1$. Recall that $|\mu_t^{\alpha}|$ is defined in Section \ref{subsub:radon}; denote also $\sigma_{\partial^{\alpha} U^0}(x) = \sqrt{\text{Var}(\partial^{\alpha} U_0(x))}$. Since the sample paths of $U^0$ lie in $C^1(\calD)$ a.s, those of $\partial^{\alpha} U^0$ lie in $C^0(\calD)$ and thus the function $x \longmapsto \text{Var}(\partial^{\alpha} U_0(x))$ also lies in $C^0(\calD)$ (\cite{azais_level_2009}, chapter 1, Section 4.3). Therefore the function $x \longmapsto \sigma_{\partial^{\alpha} U^0}(x)$ also lies in $C^0(\calD)$. We now check that the integral $I$ below is finite. We use Tonelli's theorem and the Cauchy-Schwarz inequality:
\begin{align*}
    I :=& \int_{\Omega} \sum_{|\alpha|\leq 1}\int_{\mathbb{R}^3}\Big|\partial^{\alpha} U_{\omega}^0(x-y) \Big| |\mu^{t}_{\alpha}|(dy) \sum_{|\alpha'|\leq 1} \int_{\mathbb{R}^3} \Big|\partial^{\alpha'} U_{\omega}^0(x'-y') \Big| |\mu^{t'}_{\alpha'}|(dy')  \mathbb{P}(d\omega) \\
    =& \sum_{|\alpha|,|\alpha'|\leq 1} \int_{\mathbb{R}^3}\int_{\mathbb{R}^3} \int_{\Omega}  \Big|\partial^{\alpha} U_{\omega}^0(x-y)\partial^{\alpha'} U_{\omega}^0(x'-y')\Big| \mathbb{P}(d\omega) |\mu^{t}_{\alpha}|(dy)|\mu^{t'}_{\alpha'}|(dy') \\
    =& \sum_{|\alpha|,|\alpha'|\leq 1} \int_{\mathbb{R}^3}\int_{\mathbb{R}^3} \mathbb{E}\big[|\partial^{\alpha}U^0(x-y)\partial^{\alpha'}U^0(x'-y')|\big] |\mu^{t}_{\alpha}|(dy)|\mu^{t'}_{\alpha'}|(dy') \\
    \leq& \sum_{|\alpha|,|\alpha'|\leq 1} \int_{\mathbb{R}^3}\int_{\mathbb{R}^3} \bigg(\mathbb{E}\big[\partial^{\alpha}U^0(x-y)^2\big]\mathbb{E}\big[\partial^{\alpha'}U^0(x'-y')^2\big]\bigg)^{1/2}|\mu^{t}_{\alpha}|(dy)|\mu^{t'}_{\alpha'}|(dy') \\
    \leq& \Bigg( \sum_{|\alpha|\leq 1} \int_{\mathbb{R}^3} \bigg(\mathbb{E}\big[\partial^{\alpha}U^0(x-y)^2\big]\bigg)^{1/2} |\mu^{t}_{\alpha}|(dy) \Bigg)\times \Bigg( \sum_{|\alpha|\leq 1} \int_{\mathbb{R}^3} \bigg(\mathbb{E}\big[\partial^{\alpha}U^0(x-y)^2\big]\bigg)^{1/2} |\mu^{t'}_{\alpha}|(dy) \Bigg) 
    \\ \leq &\Big( \sum_{|\alpha|\leq 1} (|\mu^{t}_{\alpha}| * \sigma_{\partial^{\alpha} U^0})(x) \Big) \times \Big( \sum_{|\alpha|\leq 1} (|\mu^{t'}_{\alpha}| * \sigma_{\partial^{\alpha} U^0})(x') \Big) < +\infty.
\end{align*}
For all multi-index $\alpha$, the scalar $(|\mu^{t}_{\alpha}| * \sigma_{\partial^{\alpha} U^0})(x)$ is finite because $x \longmapsto \sigma_{\partial^{\alpha} U^0}(x)$ is continuous and $|\mu^{t}_{\alpha}|$ is compactly supported.
Note also that from Assumption $(A_2)$, the GP $U^0$ is \textit{mean square differentiable} up to order 1, which implies (\cite{ritter2007average}, Section III.1.4) that we have, for all multi-indexes $\alpha,\alpha'$ such that $|\alpha|,|\alpha'| \leq 1$, $x$ and $x'$:
\begin{align}
\mathbb{E}\big[\partial^{\alpha} U^0(x)\partial^{\alpha'}U^0(x')\big] = \partial_1^{\alpha} \partial_2^{\alpha'}k_{\mathrm{u}}(x,x').
\end{align}
where $\partial_1$ (respectively  $\partial_2$) denotes derivatives with reference to the first (respectively second) argument of $k_{\mathrm{u}}$.
We may thus permute integrals and differential operators in $\mathbb{E}\big[U(z)U(z')\big]$: 
\begin{align}
    \mathbb{E}\big[U(z)U(z')\big] &= \mathbb{E}\Bigg[\sum_{|\alpha|\leq 1} \int_{\mathbb{R}^3} (-1)^{|\alpha|}\partial^{\alpha} U^0(x-y) \mu^{t}_{\alpha}(dy))\sum_{|\alpha'|\leq 1} \int_{\mathbb{R}^3} (-1)^{|\alpha'|}\partial^{\alpha'} U^0(x-y) \mu^{t'}_{\alpha'}(dy')\Bigg] \nonumber \\
    &= \sum_{|\alpha|,|\alpha'| \leq 1}\int_{\mathbb{R}^3} \int_{\mathbb{R}^3}(-1)^{|\alpha|}(-1)^{|\alpha'|}\partial_1^{\alpha} \partial_2^{\alpha'}\mathbb{E}\big[U^0(x-y)U^0(x'-y')\big]\mu^{t}_{\alpha}(dy)\mu^{t'}_{\alpha'}(dy') \nonumber \\
    &= \sum_{|\alpha|,|\alpha'| \leq 1}\int_{\mathbb{R}^3} \int_{\mathbb{R}^3}(-1)^{|\alpha|}(-1)^{|\alpha'|}\partial_1^{\alpha} \partial_2^{\alpha'}k_{\mathrm{u}}(x-y,x'-y')\mu^{t}_{\alpha}(dy)\mu^{t'}_{\alpha'}(dy') \nonumber \\
    &= \bigg[\Big(\sum_{|\alpha|\leq 1} \partial^{\alpha}\mu^{t}_{\alpha}  \otimes \sum_{|\alpha'| \leq 1} \partial^{\alpha'}\mu^{t'}_{\alpha'}\Big) * k_{\mathrm{u}}\bigg](x,x') = [(\Dot{F}_t \otimes \dot{F}_{t'}) * k_{\mathrm{u}}](x,x'), \nonumber
\end{align}
which proves \eqref{eq:ftp ftp fubini}. 

One proves that $\mathbb{E}\big[V(z)V(z')\big] = [({F}_t \otimes {F}_{t'}) * k_{\mathrm{v}}](x,x')$ the exact same way, which is actually simpler as $F_t$ is directly a measure. To conclude,
\begin{align}
    k_{W}(z,z') &= \text{Cov}(W(z),W(z')) \nonumber \\
    &= \mathbb{E}[(W(z)W(z')] = \mathbb{E}\big[\big(U(z) + V(z)\big)\big(U(z') + V(z') \big)\big] \nonumber\\
    &= \mathbb{E}\big[U(z)U(z')\big] + \mathbb{E}\big[U(z)V(z')\big] +  \mathbb{E}\big[V(z)U(z')\big] +  \mathbb{E}\big[V(z)V(z')\big] \nonumber \\
    &= [(\Dot{F}_t \otimes \Dot{F}_{t'}) * k_{\mathrm{u}}](x,x') + [(F_t \otimes F_{t'}) * k_{\mathrm{v}}](x,x').
\end{align}
The cross terms are null because $U(z)$ and $V(z')$ are independent as well as $U(z')$ and $V(z)$.

$(ii)$ : with expression \eqref{eq:wave kernel}, one checks that for any fixed $z'$, the function $z \longmapsto k_{W}(z,z')$ is of the form \eqref{eq sol wave} and thus verifies $\Box k_{x'} = 0$ in the sense of distributions. $(ii)$ is then a direct consequence of Proposition \ref{prop :diff_constraints_distrib}.
\end{proof}
\begin{remark}
If $U$ and $V$ are not independent, then the two terms $[(\Dot{F}_t \otimes F_{t'}) * k_{uv}](x,x')$ and $[(F_t \otimes \Dot{F}_{t'}) * k_{vu}](x,x')$ must be added to equation \eqref{eq:wave kernel}, where $k_{uv}(x,x')$ denotes the cross covariance between $U$ and $V$ : $k_{uv}(x,x') = \text{Cov}(U(x),V(x'))$ and $k_{vu}(x,x') = \text{Cov}(V(x),U(x')) = k_{uv}(x',x)$.
\end{remark}
More explicitly, we have the following Kirschoff-like integral formulas for $k_{\mathrm{v}}^{\mathrm{wave}}$ and $k_{\mathrm{u}}^{\mathrm{wave}}$:
\begin{align}
[(F_t \otimes F_{t'}) * k_{\mathrm{v}}](x,x') &= tt'\int_{S(0,1) \times S(0,1)}k_{v}(x-c|t|\gamma,x'-c|t'|\gamma')\frac{d\Omega d\Omega'}{(4\pi)^2}, \label{eq:explicit Ft Ftp}\\
[(\Dot{F}_t \otimes \Dot{F}_{t'}) * k_{\mathrm{u}}](x,x') &= \int_{S(0,1) \times S(0,1)}\Big( k_{u}(x-c|t|\gamma,x'-c|t'|\gamma') \nonumber \\ 
& \hspace{30pt}-c|t|\nabla_1 k_{u}(x-c|t|\gamma,x'-c|t'|\gamma')\cdot \gamma \nonumber \\
& \hspace{30pt}-c|t'| \nabla_2 k_{u}(x-c|t|\gamma,x'-c|t'|\gamma')\cdot \gamma' \nonumber\\
&\hspace{30pt}+ c^2tt' \gamma^T \nabla_1 \nabla_2 k_{u}(x-c|t|\gamma,x'-c|t'|\gamma')\gamma'\Big) \frac{d\Omega d\Omega'}{(4\pi)^2}.\label{eq:explicit Ft Ftp point}
\end{align}
Above, $\nabla_1 k_{u}(x,x')$ is the gradient vector of $k_{u}$ with reference to $x$,  $\nabla_2 k_{u}(x,x')$ is the gradient vector of $k_{u}$ with reference to $x'$ and $\nabla_1 \nabla_2 k_{u}(x,x')$ is the matrix whose entry $(i,j)$ is given by 
\begin{align}
\nabla_1 \nabla_2 k_{u}(x,x')_{ij} = \partial_{x_i^1}\partial_{x_j^2}k_{u}(x,x').
\end{align}
($\partial_{x_i^1}$ is the derivative with reference to the $i^{th}$ coordinate of $x$, $\partial_{x_j^2}$ is the derivative with reference to the $j^{th}$ coordinate of $x'$).

\subsubsection{\texorpdfstring{Extending the covariance functions $k_{\mathrm{u}}^{\mathrm{wave}}$ and $k_{\mathrm{v}}^{\mathrm{wave}}$ to initial conditions $u_0$ and $v_0$ with piecewise regularity}{extendingcov}}\label{subsub:extend_kwave}

The formulas \eqref{eq:explicit Ft Ftp} and \eqref{eq:explicit Ft Ftp point} are valid in a more general context than that of assumptions \ref{assump1} and \ref{assump2}. We provide below examples where these formulas yield valid covariance functions (in particular, functions defined for \textit{all} values of $(x,t)$ and $(x',t')$) corresponding to initial conditions with some forms of piecewise discontinuities. Assume, for example, that the initial speed $v_0$ is compactly supported on a ball $B(x_0,R)$ centered on some point $x_0$ with radius $R$. This is a natural model when $v_0$ is assumed to be a localized source. For the process $V^0$, this translates as $V^0(x) = 0\ a.s.$ if $x$ is outside the ball $B(x_0,R)$. One can thus truncate the covariance function of $V^0$ accordingly, e.g. choosing the following function for $k_{\mathrm{v}}$ (see Section \ref{subsub:second_GP} for $k_{1/2}$)
\begin{align}
k_{\mathrm{v}}(x,x') = k_{1/2}(x,x')\mathbbm{1}_{[0,R]}(||x-x_0||)\mathbbm{1}_{[0,R]}(||x'-x_0||).
\end{align}
Above, $||x||$ denotes the Euclidean norm of $x$. Such a covariance function indeed verifies $k_{\mathrm{v}}(x,x) = \text{Var}(V^0)(x))= 0$ if $||x-x_0|| > R$ and the GP corresponding to $k_{\mathrm{v}}$ is $V^0(x) = V_{1/2}(x)\mathbbm{1}_{[0,R]}(||x-x_0||)$, where $V_{1/2}$ is a continuous modification of a GP with covariance function $k_{1/2}$. Note that the sample paths of $V^0$ are piecewise continuous and that $V$ as defined in \eqref{eq:U_and_V} is well-defined and measurable.
The integrals in \eqref{eq:explicit Ft Ftp} still make sense and point $(ii)$ from Proposition is still valid: the sample paths of the process $V$ whose covariance function is $k_{\mathrm{v}}^{\mathrm{wave}}$ (or any other measurable centered second order random field with this covariance function) remains a solution of the wave equation in the distributional sense. One can perform the same kind of discussions on $k_{\mathrm{u}}^{\mathrm{wave}}$: for example, equation \eqref{eq:explicit Ft Ftp point} shows that when ${k_{\mathrm{u}} \in C^{1,1}(\mathbb{R}^3\times \mathbb{R}^3)\setminus C^{2,2}(\mathbb{R}^3\times \mathbb{R}^3)}$, $k_{\mathrm{u}}^{\mathrm{wave}}$ is only expected to lie in $C^{1,1}(\mathbb{R}^3\times \mathbb{R}^3)$; the sample paths of the GP with covariance function $k_{\mathrm{u}}^{\mathrm{wave}}$ will be at most of class $C^1$ and thus cannot be strong solutions of equation \eqref{eq:wave_eq}. This is the case when $k_{\mathrm{u}}$ is the $k_{3/2}$ Matérn covariance function from equation \eqref{eq:matern}.

More generally, one can incorporate a finite number of discontinuities on $k_{\mathrm{v}}$ and on the derivatives of $k_{\mathrm{u}}$ so that they remain piecewise continuous: the integrals above will remain well defined and the sample paths of the corresponding GPs will remain distributional solutions to the wave equation, even though they will not be sufficiently differentiable to be strong solutions.
\section{Conclusion and perspectives}
In Section \ref{Section:sto diff}, we have presented a new result that provides a simple characterization of the measurable second order random fields $(U(x))_{x\in\calD}$ whose sample paths verify homogeneous linear differential constraints within the framework of generalized functions. This characterization is valid for any linear differential operator $L$, provided that its coefficients fulfil minimal smoothness requirements, and no stationarity assumptions over $(U(x))_{x\in\calD}$ are required. 
Motivated by physical applications,
we described in Section \ref{Section:GP_wave} a Gaussian process model of the wave equation which is central to describe propagation phenomena.  This PDE served as an application case for Proposition \ref{prop :diff_constraints_distrib}, and the GP model was derived by putting a GP prior on the wave equation's initial conditions. In Proposition \ref{prop : wave kernel}, we presented covariance formulas that are tailored to the wave equation and take the form of convolutions; these expressions are interesting in themselves and call for physics-informed GPR applications for this equation. In particular, we showed that these formulas can model piecewise continuously differentiable solutions for the wave equation. Moreover, this setting provides a natural way to incorporate any type of information, both numerical or experimental. In a forthcoming paper, we will show how to use GPR conjointly with the covariance functions from Proposition \ref{prop : wave kernel} in a numerical setting, in order to construct approximate solutions of the wave equation based on scattered observations. This in turn provides a natural method for solving different inverse problems, by using the likelihood of GPR as well as the reconstructed solution. An other application concerns the design of transparent boundary conditions (TBC): this provides artificial boundary conditions on a computational domain so that the computed solution is exactly an approximation of a solution on the whole space. Those conditions are usually nonlocal and restricted to simple geometries. A GPR strategy is meshless therefore suitable to design TBC on any type of computational domains.

Proposition \ref{prop :diff_constraints_distrib} constitutes a first step towards understanding PDE constrained random fields in an weakened sense; different functional analysis frameworks can now be considered, obvious extensions being the weak or variational formulations of equation \eqref{eq:edp}. These formulations are obtained by transferring only a part of the derivatives of the PDE to the test function and are for instance the canonical way of studying elliptic PDEs (\cite{evans1998}, Section 6.1.2). The natural spaces arising from these formulations are Sobolev spaces rather than $\scrD'(\calD)$. An attached question, as studied in \cite{SCHEUERER2010}, is that of the Sobolev regularity of a given second order random field; a current research topic is whether or not one may relax the continuity assumptions required in \cite{SCHEUERER2010}. Finally, the matter of using random fields for modelling and approximating solutions of nonlinear PDEs is a natural direction for future research.

\subsubsection*{Acknowledgements} Research of all the authors was supported by SHOM (Service Hydrographique et Océanographique de la Marine) project ``Machine Learning Methods in Oceanography'' no-20CP07. We thank Rémy Baraille in particular for his personal involvement in the project. We are thankful to the reviewers and the editor for their interesting and constructive remarks, leading to an improved and enriched version of the manuscript.

\bibliographystyle{abbrv} 
\bibliography{bibliography_bernoulli}
\end{document}